\newcommand{\allonesvector}[0]{\mathds{1}}
\newtheorem{theorem}{Theorem}
\newtheorem{claim}[theorem]{Claim}
\newtheorem{lemma}[theorem]{Lemma}
\newtheorem{corollary}[theorem]{Corollary}
\theoremstyle{remark}
\newtheorem*{rem}{Remark}
\theoremstyle{definition}
\newtheorem{definition}[theorem]{Definition}
\xpatchcmd{\proof}{\itshape}{\normalfont\proofnamefont}{}{}
\newcommand{\proofnamefont}{}
\renewcommand{\proofnamefont}{\bfseries}
\newcommand{\C}{\ensuremath{\mathbb{C}}}
\newcommand{\E}{\ensuremath{\mathbb{E}}}
\newcommand{\Fe}{\ensuremath{\mathbb{F}}}
\newcommand{\N}{\ensuremath{\mathbb{N}}}
\newcommand{\R}{\ensuremath{\mathbb{R}}}
\DeclareMathOperator{\rk}{rk}
\DeclareMathOperator{\tr}{tr}%{\tr}{\text{tr}}
\DeclareMathOperator{\ex}{ex}%{\ex}{\text{ex}}
\newcommand{\bangle}[1]{\left\langle #1 \right\rangle}
\newcommand{\inprod}[2]{\bangle{#1, #2}}
\DeclareMathOperator{\msr}{msr}
\newcommand{\norm}[1]{\left|\left| #1 \right|\right|}
\title{Orthonormal representations of $H$-free graphs}
\author{Igor Balla\thanks{Department of Mathematics, ETH Zurich, Switzerland. Email:
    \href{mailto:igor.balla@math.ethz.ch} {\nolinkurl{igor.balla@math.ethz.ch}}.}
    \and
    Shoham Letzter\thanks{ETH Institute for Theoretical Studies, ETH Zurich, Switzerland. Email: \href{mailto:shoham.letzter@eth-its.ethz.ch} {\nolinkurl{shoham.letzter@eth-its.ethz.ch}}.
    Research supported by Dr. Max R\"ossler, the Walter Haefner Foundation and the ETH Zurich Foundation.}
    \and
    Benny Sudakov\thanks{Department of Mathematics, ETH Zurich, Switzerland. Email:
    \href{mailto:benjamin.sudakov@math.ethz.ch} {\nolinkurl{benjamin.sudakov@math.ethz.ch}}.
    Research supported in part by SNSF grant 200021-175573.}
}
\date{}
\begin{document}
\maketitle

\begin{abstract}
    Let $x_1, \ldots, x_n \in \R^d$ be unit vectors such that among any three there is an orthogonal pair. How large can $n$ be as a function of $d$, and how large can the length of $x_1 + \ldots + x_n$ be? The answers to these two celebrated questions, asked by Erd\H{o}s and Lov\'{a}sz, are closely related to orthonormal representations of triangle-free graphs, in particular to their Lov\'{a}sz $\vartheta$-function and minimum semidefinite rank. In this paper, we study these parameters for general $H$-free graphs. In particular, we show that for certain bipartite graphs $H$, there is a connection between the Tur\'{a}n number of $H$ and the maximum of $\vartheta \left( \overline{G} \right)$ over all $H$-free graphs $G$.

\end{abstract}

\section{Introduction}
    
    Given a graph $G$, a map $f : V(G) \rightarrow \R^d$ is called an \emph{orthonormal representation} of $G$ (in $\R^d$) if $||f(u)|| = 1$ for all $u \in V(G)$ and $\inprod{f(u)}{f(v)} = 0$ for all distinct $u, v \in V(G)$ such that $uv \notin E(G)$. Note that every graph $G$ on $n$ vertices has an orthonormal representation, since we may assign each vector to a corresponding orthonormal basis vector in $\R^{|G|}$. Given an orthonormal representation $f$ of a graph $G$ with vertex set $[n]$, we define $M_f$ to be the \emph{Gram matrix} of the vectors $f(1), \ldots, f(n)$, so that $(M_f)_{i,j} = \inprod{f(i)}{f(j)}$.
    
    The concept of orthonormal representations goes back to a seminal paper of Lov\'{a}sz \cite{L79}, who used them to define a graph parameter now known as the Lov\'{a}sz $\vartheta$-function. The $\vartheta$-function of a graph $G$ has several equivalent definitions. Here we list the ones that we shall use later.
    \begin{definition} \label{defn:theta}
        Let $G$ be a graph with vertex set $[n]$. The \emph{$\vartheta$-function of $G$}, denoted $\vartheta(G)$, can be defined in the following ways, which are shown to be equivalent in \cite{L79}.
        \begin{enumerate}
            \item \label{defn:one}
                $\vartheta(G)$ is the maximum, over all orthonormal representations $f$ of the complement graph $\overline{G}$, of the largest eigenvalue of the Gram matrix $M_f$.
            \item \label{defn:two}
                $\vartheta(G)$ is the maximum of $1 - \lambda_1(A) / \lambda_n(A)$, over all $n \times n$ real symmetric matrices $A$ such that $A_{i,j} = 0$ if $ij \in E(G)$ or $i = j$.\footnote{In \cite{L79}, Lov\'{a}sz forgets to include the assumption that $A$ is symmetric and $A_{i,i} = 0$ for all $i$ to his statement of Theorem 6, but it is clear that this is what he intended.}
            \item \label{defn:three}
                 $\vartheta(G)$ is the minimum, over all orthonormal representations $f$ of $G$ and all unit vectors $x$, of \\$\max_{v \in V(G)}{\inprod{x}{f(v)}^{-2}}$.
            \item \label{defn:four}
                 $\vartheta(G)$ is the maximum, over all orthonormal representations $f$ of the complement graph $\overline{G}$ and all unit vectors $x$, of $\sum_{v \in V(G)}{\inprod{x}{f(v)}^2}$.
        \end{enumerate}
    \end{definition}
    \noindent Lov\'{a}sz originally introduced the notion of the $\vartheta$-function in order to bound the Shannon capacity of a graph, and since then, the combinatorial and algorithmic applications of the Lov\'{a}sz $\vartheta$-function have been studied extensively, see e.g.\ Knuth \cite{K94}.
    
    Given a graph $G$, let us define the \emph{minimum semidefinite rank} of $G$, denoted $\msr(G)$, to be the minimum $d$ such that there exists an orthonormal representation of $G$ in $\R^d$. Note that $\msr(G)$ can be seen as a vector generalization of the chromatic number of $\overline{G}$, see \cite{HPSWM10}. Indeed, by assigning a standard basis vector of $\R^{\chi\left(\overline{G}\right)}$ to each vertex of a given color, one can see that $\msr(G) \leq \chi\left(\overline{G}\right)$. In the same paper where he introduced the $\vartheta$-function, Lov\'{a}sz \cite[Theorem 11]{L79} showed that 
    \[\vartheta(G) \leq \msr(G).\]
     
    Various notions of the minimum rank of a graph have been studied in the literature, see Fallat and Hogben \cite{FH14} for a survey. Note that an equivalent way to define the minimum semidefinite rank of a graph $G$ is as the minimum rank of a positive semidefinite matrix $M$ such that $M_{i,i} = 1$ for all $i$ and $M_{i,j} = 0$ if $ij \notin E(G)$. Dropping the positive semidefinite assumption, we arrive at the notion of minrank, which has applications in theoretical computer science, see Golovnev, Regev, and Weinstein \cite{GRW18} for references. In particular, it is related to important problems on the complexity of arithmetic circuits \cite{CPR00}. 

\subsection{A geometric problem of Lov\'{a}sz}
    
    One very interesting application of the Lov\'{a}sz $\vartheta$-function is to the following geometric problem posed by Lov\'asz and first studied by Konyagin \cite{K81}.
    \begin{quote}
        What is the maximum $\Delta_n$, of the length $\left| \left| \sum_{i=1}^{n}{x_i} \right| \right|$, over all $d$ and all unit vectors $x_1, \ldots, x_n \in \R^d$ such that among any three, there is at least one pair of orthogonal vectors? 
    \end{quote}
    Konyagin \cite{K81} gave upper and lower bounds on $\Delta_n$, in particular showing that $\Delta_n \leq \frac{3}{2} n^{2/3}$. Then Kashin and Konyagin \cite{KK83} improved the lower bound to within a logarithmic factor of the upper bound, and finally, Alon \cite{A94} was able to give an asymptotically tight construction showing that $\Delta_n = \Theta(n^{2/3})$. Note that if we define $L(G)$ to be the maximum of $\left| \left| \sum_{v \in V(G)}{f(v)} \right| \right|$ over all orthonormal representations $f$ for $G$, then the above problem is equivalent to asking for the maximum of $L(G)$ over all triangle-free graphs $G$ on $n$ vertices. The following claim, whose proof we defer to \Cref{subsec:theta}, connects $L(G)$ to $\vartheta(G)$ and $\vartheta\left(\overline{G}\right)$.
    \begin{claim} \label{L-theta}
        For any graph $G$ on $n$ vertices, we have
        \[ \frac{n}{\sqrt{\vartheta(G)}} \leq L(G) \leq \sqrt{n \vartheta\left(\overline{G}\right)}.\]
        Moreover, if $G$ is vertex-transitive, then $L(G) =  \sqrt{n \vartheta\left(\overline{G}\right)}$.
    \end{claim}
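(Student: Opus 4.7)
The plan is to prove the two inequalities using two different characterisations in \Cref{defn:theta}, and to handle the equality in the vertex-transitive case by an explicit symmetrisation.

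For the upper bound, I would fix any orthonormal representation $f$ of $G$ and observe that $\|\sum_v f(v)\|^2 = \one^T M_f \one \leq n \cdot \lambda_1(M_f)$ by the Rayleigh bound. Since $f$ is an orthonormal representation of the complement of $\overline{G}$, applying \cref{defn:one} to $\vartheta(\overline{G})$ gives $\lambda_1(M_f) \leq \vartheta(\overline{G})$, and taking the supremum over $f$ yields $L(G) \leq \sqrt{n\vartheta(\overline{G})}$. For the lower bound, I would invoke \cref{defn:three}, which furnishes an orthonormal representation $f$ of $G$ and a unit vector $x$ with $\max_v \inprod{x}{f(v)}^{-2} \leq \vartheta(G)$; equivalently, $|\inprod{x}{f(v)}| \geq 1/\sqrt{\vartheta(G)}$ for every $v$. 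Replacing $f(v)$ by $-f(v)$ whenever $\inprod{x}{f(v)} < 0$ still produces an orthonormal representation of $G$, so we may assume $\inprod{x}{f(v)} \geq 1/\sqrt{\vartheta(G)}$ for all $v$, whence $L(G) \geq \|\sum_v f(v)\| \geq \inprod{x}{\sum_v f(v)} \geq n/\sqrt{\vartheta(G)}$.

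For equality in the vertex-transitive case, I would construct a symmetrised orthonormal representation by direct sum over $H := \mathrm{Aut}(G)$. Let $f^*$ be an orthonormal representation of $G$ achieving $\lambda_1(M_{f^*}) = \vartheta(\overline{G})$ as in \cref{defn:one}, with top unit eigenvector $u^* \in \R^n$, so that $\|\sum_w u^*_w f^*(w)\|^2 = \vartheta(\overline{G})$. Define $f(v) \in \R^{d|H|}$ by
\[
f(v) \;=\; \sqrt{n/|H|}\; \bigoplus_{\sigma \in H} u^*_{\sigma(v)}\, f^*(\sigma(v)).
\]
By vertex-transitivity, orbit--stabiliser gives $\sum_{\sigma \in H} (u^*_{\sigma(v)})^2 = |H|/n$ for every $v$, so $\|f(v)\| = 1$; and for $uv \notin E(G)$ every term in $\inprod{f(u)}{f(v)}$ vanishes because $\sigma(u)\sigma(v) \notin E(G)$. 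Reindexing $w = \sigma(v)$ shows that each of the $|H|$ blocks of $\sum_v f(v)$ equals $\sum_w u^*_w f^*(w)$, hence $\|\sum_v f(v)\|^2 = |H| \cdot (n/|H|) \cdot \vartheta(\overline{G}) = n\vartheta(\overline{G})$, matching the upper bound.

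The main delicate point is the equality case: the symmetrisation has to yield a genuine orthonormal representation (unit vectors, not merely a positive semidefinite Gram matrix with the correct zero pattern), and the normalisation $\sqrt{n/|H|}$ hinges on vertex-transitivity through orbit--stabiliser. Without vertex-transitivity, the all-ones vector $\one$ need not be a top eigenvector of any extremal Gram matrix, and this direct-sum construction would break.
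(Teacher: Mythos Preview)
Your proofs of the two inequalities are essentially identical to the paper's: the upper bound via the Rayleigh quotient and \cref{defn:one}, the lower bound via the optimal handle in \cref{defn:three} with sign-flipping.

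For the vertex-transitive equality, however, you take a genuinely different route. The paper simply invokes Lov\'asz's identity $\vartheta(G)\vartheta(\overline{G}) = n$ for vertex-transitive $G$ \cite[Theorem~8]{L79}, which makes the two bounds coincide in one line. Your argument instead constructs an explicit orthonormal representation achieving $\|\sum_v f(v)\|^2 = n\vartheta(\overline{G})$, by taking an extremal representation $f^*$ with top eigenvector $u^*$ and forming the $|H|$-fold direct sum $f(v) = \sqrt{n/|H|}\,\bigoplus_{\sigma} u^*_{\sigma(v)} f^*(\sigma(v))$ over $H = \mathrm{Aut}(G)$. The orbit--stabiliser count you use is correct and the construction works. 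The trade-off: the paper's route is a one-liner but imports a non-trivial black box, while your symmetrisation is self-contained and actually exhibits the optimal representation for $L(G)$, at the cost of a longer argument. Your approach also makes transparent \emph{why} vertex-transitivity is needed (the normalisation $\|f(v)\|=1$ relies on every vertex being hit equally often under the automorphism action), whereas the paper's citation hides this inside Lov\'asz's proof.
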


    For graphs $G,H$ we say that $G$ is \emph{$H$-free} if $G$ does not contain a copy of $H$ as a subgraph. Generalizing from a triangle to an arbitrary $H$, let us now define $\lambda(n, H)$ to be the maximum value of $\vartheta\left(\overline{G}\right)$ over all $H$-free graphs $G$ on $n$ vertices. Although in this paper we study only $\lambda(n,H)$, we remark that roughly speaking, \Cref{L-theta} would allow one to translate these results to the corresponding geometric problem of finding the maximum of $L(G)$ over all $H$-free graphs $G$ on $n$ vertices, especially because the constructions we consider are either Cayley graphs, which are vertex-transitive, or are very similar to Cayley graphs. Indeed, for $H = K_3$, Konyagin's argument for the upper bound on $\Delta_n$ can be adapted to obtain $\lambda(n, K_{3}) \leq O\left(n^{1/3}\right)$, and since Alon's construction for the lower bound on $\Delta_n$ is vertex-transitive, \Cref{L-theta} implies that $\lambda(n, K_3) \geq \Omega \left( n^{1/3} \right)$, so that we have $\lambda(n, K_3) = \Theta(n^{1/3})$. Generalizing to larger cliques, it is known that 
    \[ \Omega\left( n^{1 - O(1/\log{t}) } \right) \leq  \lambda(n, K_t) \leq O\!\left(n^{1 - 2/t}\right),  \] 
    where Alon and Kahale \cite{AK98} proved the upper bound and Feige \cite{F95} proved the lower bound.

    Another way to generalize forbidding a triangle is to forbid longer cycles. Indeed, Alon and Kahale \cite{AK98} also showed that for any $t$, if $G$ is a graph on $n$ vertices having no odd cycle of length at most $2t+1$, then $\theta\left(\overline{G}\right) \leq 1 + (n-1)^{1/(2t+1)}$. Our first contribution is a generalization of this upper bound to graphs that have no cycle of length exactly $2t+1$.
    \begin{theorem} \label{thm:odd-cycle-upper}
        For all $n, t \geq 1$ we have $\lambda(n, C_{2t+1}) \leq O\!\left( t \, n^{1/(2t+1)} \right)$.
    \end{theorem}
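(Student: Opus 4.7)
I would prove the bound by a trace-method argument via Definition~\ref{defn:one} of the $\vartheta$-function. Let $f$ be an orthonormal representation of $G$ attaining $\lambda_1(M_f) = \vartheta(\overline{G})$, and set $B = M_f - I$. Then $B$ is symmetric, zero on the diagonal, supported on edges of $G$, and satisfies $B \succeq -I$, so in particular $|B_{ij}| \leq 1$. The target reduces to proving $\lambda_1(B) \leq O\!\left(t \cdot n^{1/(2t+1)}\right)$.

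Since the eigenvalues $\lambda_1 \geq \ldots \geq \lambda_n$ of $B$ all lie in $[-1, \lambda_1]$, the crude trace inequality
\[
\lambda_1^{2t+1} \;\leq\; \mathrm{tr}(B^{2t+1}) + \sum_{\lambda_i < 0} |\lambda_i|^{2t+1} \;\leq\; \mathrm{tr}(B^{2t+1}) + n
\]
holds. It therefore suffices to establish $\mathrm{tr}(B^{2t+1}) \leq O(t^{2t+1}) \cdot n$. I would expand
\[
\mathrm{tr}(B^{2t+1}) \;=\; \sum_{v_0 v_1 \ldots v_{2t} v_0} \prod_{i=0}^{2t} B_{v_i v_{i+1}},
\]
the sum being over closed walks of length $2t+1$ in $G$. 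The key structural input is that $G$ contains no $C_{2t+1}$: thus no closed walk of length $2t+1$ can be a simple cycle, and so (by the standard fact that any closed walk of odd length $\ell$ contains an odd cycle of length at most $\ell$) every such walk either repeats some edge or traces out an Eulerian subgraph on $2t+1$ edges that is not a cycle, and in either case contains a strictly shorter odd cycle.

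The plan is then to partition the walks according to the shortest odd cycle they contain, and in each class exploit the constraint $B \succeq -I$—via the PSD condition on principal submatrices of $M_f$ corresponding to short odd cycles—to extract cancellations that yield a bound of order $n$ (up to a $t$-dependent factor). This step is crucial because the naive bound $|\mathrm{tr}(B^{2t+1})| \leq W_{2t+1}(G)$ coming just from $|B_{ij}| \leq 1$ is too weak: for example, in the friendship graph $F_k$ there are $\Theta(n^2)$ closed walks of length $2t+1$, but $\mathrm{tr}(B^{2t+1})$ is only $O(n)$, precisely because of PSD-induced cancellations among positive and negative eigenvalue contributions.

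The main obstacle will be carrying out this PSD-cancellation argument uniformly over all walk classes and showing that the summed contribution stays at most $O(t^{2t+1}) \cdot n$. Achieving the linear-in-$n$ factor is essential, since any bound of order $n^{1+\delta}$ would propagate through the trace inequality to give $\lambda_1 \geq n^{\delta/(2t+1)} \cdot n^{1/(2t+1)}$ and fail to match the claimed exponent; the $t$-dependent constant $O(t^{2t+1})$, by contrast, costs only the factor $O(t)$ after taking the $(2t+1)$-st root.
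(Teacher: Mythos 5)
Your setup is sound and runs parallel to the paper's: using Definition~\ref{defn:one}, taking the orthonormal representation $f$ of $G$ that attains $\vartheta(\overline{G})$, and reducing to a trace bound on a high odd power. A minor stylistic difference is that you pass to $B = M_f - I$ and must then account for the (harmless, since $B \succeq -I$) negative eigenvalues, whereas the paper works directly with the PSD matrix $M = M_f$ and uses $\lambda_1^{2t+1} \le \tr(M^{2t+1})$ with no correction term. Your reduction to $\tr(B^{2t+1}) \le O(t^{2t+1})\,n$ is correct, and you correctly identify (via the friendship-graph example) that a crude walk-count with $|B_{ij}| \le 1$ is far too weak.

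However, that reduction is where your proof stops. The ``plan'' of partitioning closed walks by the shortest odd cycle they contain and then ``extracting cancellations'' from the positive semidefiniteness of principal submatrices is an intention, not an argument: you give no mechanism by which PSD constraints on submatrices indexed by a short odd cycle turn a $\Theta(n^2)$-size family of walks into an $O(n)$ total contribution, and you explicitly flag this as the main obstacle. So there is a genuine gap, and it sits precisely at the one step that carries the whole proof. The paper closes this gap with three ingredients you do not have. First, a locality observation: for a closed walk $u_0 u_1 \cdots u_{2t} u_0$, every $u_i$ lies in $N^t(u_0)$. Second, a structural input (Lemma~\ref{lem:chi-bound}, from Erd\H{o}s--Faudree--Rousseau--Schelp): if $G$ has no $C_{2t+1}$, each BFS layer $A_r = \{u : d(u,u_0) = r\}$ with $r \le t$ satisfies $\chi(G[A_r]) \le 2t$, hence splits into $2t$ independent sets. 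Third, a projection trick: each independent set of vertices corresponds to an orthonormal set of vectors, so summing $f(u)f(u)^\intercal$ over it yields an orthogonal projection; the walk sum, sliced by distance profile and color profile, becomes $f(u_0)^\intercal (\text{product of projections}) f(u_0) \le 1$. The final count of profiles, $3^{2t} \cdot (2t)^{2t}$, then gives $\tr(M^{2t+1}) \le (6t)^{2t} n$. Your proposal has none of these three steps, and without a concrete substitute, the ``PSD-cancellation'' route remains speculative.
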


    We say that a graph $G$ has an \emph{optimal spectral gap} if $|\lambda_i(A)| \leq O\!\left(\sqrt{\lambda_1(A)}\right)$ for $2 \leq i \leq n$, where $A$ is the adjacency matrix of $G$ and $\lambda_1(A) \geq \ldots \geq \lambda_n(A)$ are its eigenvalues. Alon and Kahale also noted that their bound for graphs having no odd cycles of length at most $2t+1$, is tight via a modification (see, e.g., \cite{KS06} section 3, example 10) of the construction of Alon \cite{A94}. The key properties that make such a construction useful are that it is regular, dense, and has an optimal spectral gap. Indeed, a dense graph with an optimal spectral gap has an adjacency matrix with a large ratio of $| \lambda_1(A) / \lambda_n(A) |$, which by Definition \ref{defn:two} of the $\vartheta$-function leads to a good lower bound for $\theta\!\left(\overline{G}\right)$. For a graph $H$, the Tur\'{a}n number $\ex(n, H)$ is the maximum number of edges in an $H$-free graph $G$ on $n$ vertices. For bipartite $H$ such as $C_4, C_6, C_{10}, K_{2,t}$ and $K_{t, (t-1)! + 1}$, there are known constructions of $H$-free graphs $G$ that attain good lower bounds for the Tur\'{a}n number, i.e.\ $|E(G)| \geq \Omega\!\left( \ex(n, H) \right)$. Interestingly, most of these construction are also known to have optimal spectral gaps (see \cite{KS06} section 3, examples 6,7,12). Since they are regular with degree on the order of $\ex(n,H)/n$, it follows from the previous discussion that in such cases
    \[
        \lambda(n, H) \geq \vartheta\!\left(\overline{G}\right) \geq \Omega\left( \sqrt{\frac{\ex(n,H)}{n}} \right).
    \]
    In \Cref{subsec:theta}, we prove the following theorem by showing that the graphs discussed above have optimal spectral gaps.
    \begin{theorem} \label{thm:lower-bounds}
        Let $n \geq 1$.
        \begin{enumerate}
            \item For all $t \in \{4,6,10\} \cup (2\N + 1)$, we have $\lambda(n, C_t) \geq \Omega\left(n^{1/t}\right)$.
    
            \item For all $t \geq 2$, we have $\lambda\left(n, K_{2,t}\right) \geq \Omega\left( t^{1/4} n^{1/4}\right)$.
            
            \item For all $t \geq 3$, we have $\lambda\left(n, K_{t,(t-1)! + 1}\right) \geq \Omega\left( n^{\frac{1}{2}(1 - 1/t)} \right)$.
        \end{enumerate}
    \end{theorem}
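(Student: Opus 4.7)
The plan is to exhibit, for each forbidden $H$ appearing in the statement, a $d$-regular $H$-free graph $G$ on (up to padding) $n$ vertices whose adjacency matrix $A$ has least eigenvalue $\lambda_n(A) = -\Theta(\sqrt{d})$. Since $A_{i,j} = 0$ whenever $ij \in E(\overline{G})$ or $i = j$, plugging $A$ directly into Definition~\ref{defn:two} gives
\[
  \vartheta\bigl(\overline{G}\bigr) \;\geq\; 1 - \frac{\lambda_1(A)}{\lambda_n(A)} \;=\; 1 + \frac{d}{|\lambda_n(A)|} \;=\; \Omega\bigl(\sqrt{d}\bigr),
\]
using $\lambda_1(A) = d$ by regularity. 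For a construction with $|E(G)| = \Theta(\ex(n,H))$ edges this means $d = \Theta(\ex(n,H)/n)$, and hence $\vartheta(\overline{G}) \geq \Omega\bigl(\sqrt{\ex(n,H)/n}\bigr)$, which specialises to the three claimed rates once the standard orders of $\ex(n,C_t)$, $\ex(n,K_{2,t})$, and $\ex(n,K_{t,(t-1)!+1})$ are inserted.

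For the concrete constructions I would invoke the following well-studied families. For $H = C_4, C_6, C_{10}$, take the point--line incidence graph of the projective plane, generalised quadrangle, and generalised hexagon of order $q$ respectively; these are $(q+1)$-regular bipartite graphs of girth $2t+2$ whose non-trivial eigenvalues have magnitudes $\sqrt{q}$, $\sqrt{2q}$, and $\sqrt{3q}$, so all three have an optimal spectral gap. For odd $H = C_{2t+1}$, I would use the modification of Alon's Cayley-graph construction described in \cite{KS06}, Section~3, Example~10, which produces a $\Theta\bigl(n^{2/(2t+1)}\bigr)$-regular $C_{2t+1}$-free graph with optimal spectral gap. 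For $H = K_{2,t}$, use F\"uredi's generalisation of the Erd\H{o}s--R\'enyi polarity graph, which is $K_{2,t}$-free, essentially $\Theta(\sqrt{tn})$-regular, and has optimal spectral gap. For $H = K_{t,(t-1)!+1}$, use the projective norm graphs of Koll\'ar--R\'onyai--Szab\'o and Alon--R\'onyai--Szab\'o, whose optimal spectral gap follows from Weil-type character sum estimates (see \cite{KS06}, Section~3, Example~12).

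Two technical issues will need to be addressed. First, the constructions above only exist for $q$ a prime power, so to deduce the bound for every $n$ I would pick the largest valid $q$ with the construction having at most $n$ vertices and pad with isolated vertices; since $\overline{G}$ is then an induced subgraph of the complement of the padded graph and $\vartheta$ is monotone under induced subgraphs, the lower bound transfers with only a constant-factor loss, which Bertrand's postulate guarantees is tolerable. The second and main obstacle is that having an \emph{optimal spectral gap} as defined in the paper only controls $|\lambda_i(A)|$ from above, whereas the definition of $\vartheta$ via the ratio $\lambda_1/\lambda_n$ genuinely needs $\lambda_n(A)$ to be negative and of order $\sqrt{d}$; verifying this quantitative lower bound on $|\lambda_n(A)|$ is where most of the work lies. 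However, in each of the constructions above the full spectrum is either explicitly known (in the incidence-graph cases) or tightly controlled by Weil-type estimates (for the norm graphs and Cayley constructions), so $\lambda_n(A) = -\Theta(\sqrt{d})$ in each case and the argument closes.
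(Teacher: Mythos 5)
Your overall strategy (regular $H$-free graphs with optimal spectral gap, plugged into Definition~\ref{defn:two}) is exactly the one the paper follows, and your choices for $C_{2t+1}$ and $K_{t,(t-1)!+1}$ coincide with the paper's. But two of your concrete ingredients don't work as stated.

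The crucial problem is your choice of bipartite incidence graphs for $C_4$, $C_6$, $C_{10}$. For any bipartite graph $G$, we have $\vartheta(\overline{G}) \leq \chi(G) \leq 2$ (sandwich theorem), so no bipartite construction can give an unbounded lower bound on $\lambda(n,H)$. Concretely, any symmetric matrix $A$ supported on the edges of a bipartite graph has a block off-diagonal structure, so its spectrum is symmetric about $0$; a $(q+1)$-regular bipartite incidence graph therefore has $\lambda_n(A) = -(q+1) = -\lambda_1(A)$ (not $-\Theta(\sqrt{q})$ as your second ``technical issue'' paragraph asserts), and Definition~\ref{defn:two} yields only $1 - \lambda_1/\lambda_n = 2$. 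The paper instead uses \emph{polarity} graphs of the projective plane, the generalised quadrangle, and the generalised hexagon: these one-sided quotients of the incidence graph are not bipartite, have $q^2+q+1$ (rather than $2(q^2+q+1)$) vertices, are $(q+1)$-regular, and have all non-trivial eigenvalues of magnitude $\Theta(\sqrt q)$. That is what makes the ratio $\lambda_1/|\lambda_n| = \Theta(\sqrt q)$ and the argument close.

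The second, smaller gap is for $K_{2,t}$. You correctly name F\"uredi's construction, but you simply assert it has an optimal spectral gap; the paper points out this was not previously in the literature and devotes roughly a page to proving it (via the relation $A^2 = (q-t)I + tJ - tQ$ for an auxiliary $0/1$-matrix $Q$ with $(q-1-t)/t$ ones per row, followed by Perron--Frobenius to control $\lambda_i(A^2)$). A full proof would need to supply this spectral computation, not just cite the construction. The remaining two cases ($C_{2t+1}$ via Alon's construction and $K_{t,(t-1)!+1}$ via projective norm graphs) match the paper, and your padding argument for non-prime-power $n$ is fine.
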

    
    Since the Tur\'{a}n number can sometimes provide a lower bound for $\lambda(n,H)$, one might wonder if it can also provide an upper bound. If $H$ is a graph such that we can remove a vertex to obtain a tree and we have $\ex(n, H) \leq O(n^{1+\alpha})$ for some $\alpha > 0$, then we are able to obtain such an upper bound.
    \begin{theorem} \label{thm:Turan-upper-bound}
        Let $h \geq 1$ and let $H$ be a connected graph on $h$ vertices, containing a vertex $v$ with $H \backslash v$ being a tree. Furthermore, suppose that there exist $c, \alpha$ with $0 < \alpha \leq 1$ and $c \geq 1$ such that $\ex(n,H) \leq c n^{1 + \alpha}$ for all $n \geq 1$. Then for all $n \geq 1$, it holds that
        \[ \lambda(n, H) \leq 20 \cdot \frac{\sqrt{c h}}{\alpha} \cdot n^{\alpha/2}. \]
    \end{theorem}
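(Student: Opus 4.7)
The plan is to use the spectral characterisation of $\vartheta$ from Definition~\ref{defn:theta}(\ref{defn:one}). Pick an orthonormal representation $f$ of $G$ whose Gram matrix $M_f$ has top eigenvalue $W := \lambda_1(M_f) = \vartheta(\overline{G})$, and let $y$ be a corresponding unit eigenvector. Since $(M_f)_{vv} = 1$ and $(M_f)_{uv} = 0$ for non-adjacent $u \neq v$ in $G$, the eigenvector equation $M_f y = W y$ rearranges to
\[
    (W - 1)\, y_v \;=\; \sum_{u \,:\, uv \in E(G)} \langle f(u), f(v)\rangle\, y_u \qquad \text{for every } v \in V(G).
\]

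To exploit the $H$-free hypothesis, let $T := H \setminus v_{\ast}$, the tree on $h - 1$ vertices, rooted at some $r \in V(T)$. The idea is to iterate the above identity along the edges of $T$, processing leaves towards the root: each leaf--contraction uses $\sum_{u \sim \phi(\text{parent})} \langle f(\phi(\text{parent})), f(u)\rangle\, y_u = (W - 1)\, y_{\phi(\text{parent})}$, collapsing one edge at the cost of one factor of $W - 1$. After the $h - 2$ contractions along $E(T)$, one obtains an identity expressing a power $(W - 1)^{h - 2}$ (times a mild $y$-weighted factor depending on the branching of $T$) as a sum over maps $\phi \colon V(T) \to V(G)$, each term weighted by $\prod_{uu' \in E(T)} \langle f(\phi(u)), f(\phi(u'))\rangle$ and products of $y$-coordinates; the terms vanish unless $\phi$ is a homomorphism $T \to G$, so taking absolute values (using $|\langle f(u), f(v)\rangle| \leq 1$) upper-bounds $(W - 1)^{h - 2}$ by a $y$-weighted homomorphism count from $T$ into $G$.

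The remaining step is to bound this weighted homomorphism sum using the Tur\'an input $\ex(n, H) \leq c n^{1 + \alpha}$ together with the $H$-freeness of $G$, via iterated Cauchy--Schwarz along the edges of $T$. At each edge contraction, Cauchy--Schwarz together with estimates of the form $\sum_{u \sim v} y_u^2 \leq d_v$ and $\|y\|^2 = 1$ trades a $y$-factor for a degree-factor; the Tur\'an input then converts the accumulated degrees into a polynomial in $n^{\alpha}$. The $H$-freeness of $G$ enters precisely at the last contraction step: for any fixed copy of $T$ inside $G$, all common neighbours of $\phi(N_H(v_{\ast}))$ must lie inside $\phi(V(T))$, so the "extension factor" contributed by the would-be vertex $v_{\ast}$ is at most $h - 1$. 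Combining everything should yield an inequality of the form $(W - 1)^{h - 2} \leq C^{h - 2} (\sqrt{ch}/\alpha)^{h - 2} n^{(h - 2)\alpha/2}$ for some absolute constant $C$, from which $W \leq 1 + 20\sqrt{ch}/\alpha \cdot n^{\alpha/2}$ follows by taking $(h - 2)$-th roots. The main obstacle is executing the iterated Cauchy--Schwarz carefully enough that the $\sqrt{h}$ factor from the $h - 2$ contractions and the $1/\alpha$ factor from the Tur\'an balancing combine into the correct constant, without extraneous polynomial-in-$h$ or logarithmic-in-$n$ losses.
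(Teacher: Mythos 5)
Your route---iterating the eigenvector equation $(W-1)y_v = \sum_{u\sim v}\langle f(u),f(v)\rangle y_u$ along the edges of the tree $T = H\setminus v_\ast$---is genuinely different from the paper's proof, which does no tree iteration at all. The paper instead partitions $V(G)$ into a low-degree part $U = \{v : d(v)\le 4cn^\alpha\}$ and a high-degree part $W$, uses the subadditivity $\vartheta(\overline{G})\le\vartheta(\overline{G[U]})+\vartheta(\overline{G[W]})$ from Definition~\ref{defn:theta}(\ref{defn:four}), handles $W$ by induction on $n$ (since $|W|\le n/2$ by the Tur\'an hypothesis), and handles $U$ directly via the max-row-sum bound $\lambda_1(M_f)\le\max_u\sum_w|\langle f(u),f(w)\rangle|$, controlling each row sum with a \emph{single} application of Parseval (on the $\le h$ independent sets into which the $T$-free neighbourhood $N'(u)$ splits) followed by Cauchy--Schwarz to get $1+\sqrt{h\cdot d(u)}\le 1+\sqrt{4ch\,n^\alpha}$.

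That said, your sketch has gaps that I do not think are cosmetic. First, the leaf-to-root contraction produces powers $y_{\phi(p)}^{\deg_T(p)-1}$ at every internal branch vertex $p$, so for a non-path $T$ the eigenvector equation no longer applies as an identity; calling this a ``mild $y$-weighted factor'' skips the actual difficulty. Second, and more seriously, there is no mechanism in your argument for controlling high-degree vertices: the ``iterated Cauchy--Schwarz'' you describe accumulates raw degree factors $d_v$, and the Tur\'an input $\ex(n,H)\le cn^{1+\alpha}$ only bounds the \emph{average} degree by $O(n^\alpha)$; an $H$-free graph may still contain vertices of degree $\Theta(n)$ (e.g.\ a star is $H$-free for every $H$ with a cycle), which would blow up the accumulated product. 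The paper's degree dichotomy plus induction on $n$ exists precisely to quarantine these vertices, and your argument contains no substitute for it. Third, your statement that ``$H$-freeness enters at the last contraction step'' via an ``extension factor for $v_\ast$'' is misplaced: $v_\ast\notin V(T)$ never appears in your sum over maps $\phi:V(T)\to V(G)$. Where $H$-freeness actually must enter (and does in the paper) is the local fact that $N(u)$ is $T$-free for every $u$, hence $\chi(G[N(u)])\le h$, which via Parseval gives $\sum_{w\in N(u)}\langle f(u),f(w)\rangle^2\le h$; without this your Cauchy--Schwarz contractions have no way to convert inner-product sums into $O(h)$ rather than $O(d(u))$. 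As written, the proposal names the right ingredients (Definition~\ref{defn:theta}(\ref{defn:one}), Cauchy--Schwarz, the tree structure) but omits the two ideas the paper's proof actually turns on: the Parseval bound from the $T$-free neighbourhood, and the degree split plus induction on $n$.
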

    \noindent Now define $\theta_{t, s}$ to be the graph consisting of $s$ internally disjoint paths of length $t$ between a pair of vertices, and note that in particular $\theta_{t, 2} = C_{2t}$ and $\theta_{2, s} = K_{2,s}$. Since $\theta_{t, s}$ consists of a tree together with an additional vertex, we will use \Cref{thm:Turan-upper-bound} together with known upper bounds on Tur\'{a}n numbers to obtain the following corollary.
    \begin{corollary} \label{cor:Turan-upper-bound}
        Let $n \geq 1$. For all $t, s \geq 2$, we have $\lambda\!\left(n, \theta_{t, s} \right) \leq O\left( t^2 s^{1 - 1/(2t)} n^{1/(2t)} \right)$.
        In particular, for all $s,t \geq 2$, we have 
        \begin{align*}
            \lambda(n, C_{2t}) \leq O\!\left( t^2 n^{1/(2t)} \right) \qquad \qquad
            \lambda\!\left(n, K_{2, s} \right) \leq O\!\left( s^{3/4} n^{1/4} \right).
        \end{align*}
    \end{corollary}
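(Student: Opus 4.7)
The plan is to apply Theorem~\ref{thm:Turan-upper-bound} directly to $H = \theta_{t,s}$ and verify its hypotheses, then combine with a known upper bound on $\ex(n, \theta_{t,s})$.

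First I would check the structural hypotheses. The graph $\theta_{t,s}$ consists of two ``pole'' vertices $u,v$ joined by $s$ internally disjoint paths of length $t$, so it is connected on $h = 2 + s(t-1) \leq st$ vertices. Taking $v$ to be one of the poles, $\theta_{t,s} \setminus v$ is a ``spider'': the $s$ paths, each of length $t-1$ and all emanating from $u$, glued at $u$. This is a tree, so Theorem~\ref{thm:Turan-upper-bound} applies.

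Next, I would invoke the Turán-number bound of Faudree and Simonovits for theta graphs, refined (in its dependence on $s$) by Bukh and Jiang, which gives
\[
    \ex(n, \theta_{t,s}) \leq c \cdot t \cdot s^{1 - 1/t} \cdot n^{1 + 1/t}
\]
for some absolute constant $c$. This identifies the parameters $\alpha = 1/t$ and $c_{\mathrm{Turan}} = O(t \cdot s^{1 - 1/t})$ in the notation of Theorem~\ref{thm:Turan-upper-bound}. Plugging into the conclusion of that theorem gives
\[
    \lambda(n, \theta_{t,s}) \;\leq\; 20 \cdot \frac{\sqrt{c_{\mathrm{Turan}} \cdot h}}{\alpha} \cdot n^{\alpha/2}
    \;\leq\; O\!\left( t \cdot \sqrt{t \cdot s^{1-1/t} \cdot st} \cdot n^{1/(2t)} \right)
    \;=\; O\!\left( t^2 s^{1 - 1/(2t)} n^{1/(2t)} \right),
\]
which is the main statement of the corollary.

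The two specialisations are then immediate: substituting $s = 2$ gives $\theta_{t,2} = C_{2t}$ and the bound $O(t^2 n^{1/(2t)})$, while substituting $t = 2$ gives $\theta_{2,s} = K_{2,s}$ and the bound $O(s^{3/4} n^{1/4})$ (the factor $t^2$ becomes a constant when $t = 2$).

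The main obstacle is not the application of Theorem~\ref{thm:Turan-upper-bound} itself, which is essentially plug-and-play once the tree-minus-vertex structure of $\theta_{t,s}$ is noted; rather, it is ensuring that one cites a Turán bound with the correct joint dependence on $t$ and $s$ so that the final constant comes out as $t^2 s^{1 - 1/(2t)}$ rather than something larger. Using the Bondy--Simonovits bound $\ex(n, C_{2t}) \leq O(t \cdot n^{1 + 1/t})$ directly would handle the $s = 2$ case, and the Faudree--Simonovits / Bukh--Jiang-type estimate with the $s^{1-1/t}$ factor is needed for general $s$; once these are in hand the arithmetic is routine.
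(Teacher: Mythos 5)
Your proposal is correct and follows the same route as the paper: note that $\theta_{t,s}$ minus a pole is a spider (hence a tree), invoke a Tur\'an bound $\ex(n,\theta_{t,s}) \leq O\bigl(t s^{1-1/t} n^{1+1/t}\bigr)$, and plug $\alpha = 1/t$, $c = O(t s^{1-1/t})$, $h \leq st$ into Theorem~\ref{thm:Turan-upper-bound}; the arithmetic yielding $O\bigl(t^2 s^{1-1/(2t)} n^{1/(2t)}\bigr)$ is right. The one slip is the attribution: the Tur\'an bound for general theta graphs with the $s^{1-1/t}$ dependence is due to Bukh and Tait \cite{BT18} (which is what the paper cites), whereas Bukh and Jiang \cite{BJ17} concerns only even cycles $C_{2t}$ and Faudree--Simonovits gives weaker dependence on $s$.
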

    \begin{rem}
		The upper bound for $\lambda(n,C_{2t})$ can be improved to $O\left( t \, n^{1/(2t)} \right)$ using the proof technique from \Cref{thm:odd-cycle-upper}, see \Cref{thm:even-cycle-upper} in the appendix for details.
    \end{rem}
    
    \noindent Note that the lower bounds for $C_t$ and $K_{2,t}$ given in \Cref{thm:lower-bounds} have corresponding upper bounds via \Cref{thm:odd-cycle-upper} and \Cref{cor:Turan-upper-bound}, which are tight up to the constants depending on $t$. Unfortunately, since $K_{t,(t-1)!+1}$ for $t \geq 3$ is not a tree together with a vertex, we are able to obtain only a weak upper bound in this case.
    \begin{theorem} \label{thm:bipartite-upper-bound}
        For all $s \geq t \geq 2$,  there exists a constant $c_{t,s}$ such that
        \[
            \lambda\!\left(n, K_{t,s}\right) \leq c_{t,s} \, n^{1 - 2/t + 1/\left(t \, 2^{t-1}\right)}. 
        \]
    \end{theorem}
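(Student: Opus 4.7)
The natural strategy is induction on $t$, with the base case $t = 2$ immediately supplied by \Cref{cor:Turan-upper-bound} (which gives exponent $1/4$, matching the claim). The key structural observation for the inductive step is that for every vertex $v$ of a $K_{t,s}$-free graph $G$, the induced subgraph $G[N(v)]$ is $K_{t-1, s}$-free: a copy of $K_{t-1, s}$ inside $N(v)$ together with $v$ would produce a $K_{t,s}$ in $G$.

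Let $f$ be an orthonormal representation of $G$ with Gram matrix $M$ achieving $\lambda_1(M) = \vartheta(\overline{G}) =: \lambda$, and let $z$ be a unit top eigenvector of $M$. The restriction $M|_{N(v)}$ is itself the Gram matrix of an orthonormal representation of $G[N(v)]$, so by induction $\lambda_1(M|_{N(v)}) \le c_{t-1,s}\, d_v^{\alpha_{t-1}}$, where $\alpha_k := 1 - 2/k + 1/(k\cdot 2^{k-1})$ and $d_v := |N(v)|$. The eigenvalue equation at $v$ reads $(\lambda - 1) z_v = \inprod{f(v)}{\sum_{u \sim v} z_u f(u)}$; combined with Cauchy--Schwarz and the local bound, this gives
\[
(\lambda - 1)^2 z_v^2 \;\leq\; \biggl\| \sum_{u \sim v} z_u\, f(u) \biggr\|^2 \;\leq\; c_{t-1,s}\, d_v^{\alpha_{t-1}} \sum_{u \sim v} z_u^2.
\]
Summing over $v$ and interchanging the order of summation yields $(\lambda - 1)^2 \leq c_{t-1,s} \sum_u z_u^2 \sum_{v \sim u} d_v^{\alpha_{t-1}}$. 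A direct computation confirms the recursion $\alpha_t = (1 - 1/t)(1 + \alpha_{t-1})/2$, so the induction closes provided one can bound the right-hand side by $O(n^{(1 - 1/t)(1 + \alpha_{t-1})})$.

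The main obstacle is precisely this final estimate. H\"{o}lder's inequality together with the Kov\'ari--S\'os--Tur\'an bound $|E(G)| \leq O(s^{1/t} n^{2 - 1/t})$ gives $\sum_v d_v^{\alpha_{t-1}} \leq O\!\left(n^{1 + \alpha_{t-1}(1 - 1/t)}\right)$, but estimating $\sum_u z_u^2 \sum_{v \sim u} d_v^{\alpha_{t-1}}$ by this sum via the crude inequality $\sum_{u \sim v} z_u^2 \leq 1$ loses a factor of $n^{1/t}$ --- and this loss is real, since in an irregular $K_{t,s}$-free graph such as a star, the eigenvector $z$ may concentrate on vertices of very high degree. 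To overcome this, one could iterate the neighborhood argument on the Hadamard powers $M^{[2^k]}$, each of which is also a valid Gram matrix of $G$ by the Schur product theorem; this iteration mirrors the factor $2^{t-1}$ appearing in the denominator of $\alpha_t$ and should effectively force the eigenvector weight onto lower-degree vertices, recovering the lost exponent.
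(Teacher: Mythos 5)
You correctly identify the key structural fact (in a $K_{t,s}$-free graph, every neighborhood is $K_{t-1,s}$-free), the base case from \Cref{cor:Turan-upper-bound}, and the recursion $\alpha_t = (1-1/t)(1+\alpha_{t-1})/2$ on the exponents, all of which match the paper. But you stop at precisely the point where the real work begins, and your proposed fix does not close the gap.

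After summing the local bound $(\lambda-1)^2 z_v^2 \le c_{t-1,s}\, d_v^{\alpha_{t-1}} \sum_{u\sim v} z_u^2$ and swapping sums, you are left needing to control $\sum_u z_u^2 \sum_{v\sim u} d_v^{\alpha_{t-1}}$, and you rightly observe that a top eigenvector can concentrate on high-degree vertices, so no unconditional $\|z\|$-free bound is available. Your suggestion to iterate Hadamard powers $M^{[2^k]}$ is speculative and, as far as I can tell, does not do what you want: Schur's theorem gives $\lambda_1(M^{[2]}) \le \lambda_1(M)$, so Hadamard squaring does not amplify anything, and there is no mechanism in it that redistributes eigenvector mass toward low-degree vertices; the $2^{t-1}$ in the exponent comes simply from iterating Cauchy--Schwarz in the recursion, not from any doubling operation on the Gram matrix. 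The ingredient you are missing is the degree-splitting device the paper uses: set $U = \{v : d(v) \le 4 a_{t,s} n^{1-1/t}\}$ and $W = V\setminus U$, use subadditivity $\vartheta(\overline{G}) \le \vartheta(\overline{G[U]}) + \vartheta(\overline{G[W]})$, observe by K\"ov\'ari--S\'os--Tur\'an that $|W| \le n/2$ so $\vartheta(\overline{G[W]})$ is handled by an inner induction on $n$, and on $U$ every degree is uniformly bounded, so the neighborhood estimate plus Cauchy--Schwarz gives the row-sum bound $\lambda_1(M) \le 1 + \sqrt{c_{t-1,s}}\,|N'(u)|^{(1+\alpha_{t-1})/2} = O(n^{\alpha_t})$ without ever needing to control where $z$ lives. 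Substituting this for your Hadamard-power sketch (keeping either your eigenvalue-equation bookkeeping or the paper's simpler row-sum bound on $U$) completes the argument.
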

    
\subsection{Almost orthogonal vectors}

    Upon hearing about the results of Kashin and Konyagin \cite{KK83} towards Lov\'{a}sz's problem, Erd\H{o}s asked the following related question (see Ne\v{s}et\v{r}il and Rosenfeld \cite{N99} for a historical summary):
    
    \begin{quote}
        What is the maximum, $\alpha(d)$, of the number of vectors in $\R^d$ such that among any three distinct vectors there is at least one pair of orthogonal vectors?
    \end{quote}
    Rosenfeld \cite{R91} called such vectors \emph{almost orthogonal}. By taking two copies of each of the vectors from a basis in $\R^d$, we obtain $2d$ almost orthogonal vectors. Erd\H{o}s believed that a construction with more than $2d$ vectors does not exist, and indeed Rosenfeld showed that $\alpha(d) = 2d$ (see Deaett \cite{D11} for a short and nice proof that is slightly more general). 
    
    After his initial question was resolved, Erd\H{o}s further asked what happens if we replace 3 by a larger integer $t$. F\"{u}redi and Stanley \cite{FS92} defined $\alpha(d,t)$ to be the maximum number of vectors in $\R^d$ such that, among any $t+1$ distinct vectors, some pair is orthogonal. By considering $t$ orthogonal bases in $\R^d$, we obtain $\alpha(d,t) \geq dt$, and Erd\H{o}s asked whether equality holds. F\"{u}redi and Stanley proved that it does not always hold by showing that $\alpha(4,5) \geq 24$, and conjectured that there exists a constant $c$ such that $\alpha(d, t) < (dt)^{c}$. This conjecture was later also proven to be false by Alon and Szegedy \cite{AS99}, who showed that for some constant $\delta > 0$ and $t$ large enough, $\alpha(d, t) \geq d^{ \frac{\delta \log{t}}{\log{\log{t}}} }$.

    One can see that Erd\H{o}s's question is almost equivalent to asking for the minimum of $\msr(G)$ over all $K_{t+1}$-free graphs $G$ on $n$ vertices. The difference is that Erd\H{o}s was asking for the vectors to be distinct, while an orthonormal representation of a graph may label multiple vertices with the same vector. Nonetheless, we define $\rho(n,H)$ to be the minimum of $\msr(G)$ over all $H$-free graphs $G$ on $n$ vertices. Some further motivation for studying $\rho(n,H)$ comes from Pudl\'{a}k \cite{P02}, who, inspired by questions in circuit complexity, studied the minrank and minimum semidefinite rank of graphs without a cycle of given length. More recently, Haviv \cite{H18, H-F18} studied the minrank and Lov\'{a}sz $\vartheta$-function, in particular using the probabilistic method, in order to construct graphs with large minrank and whose complements are $H$-free.

    We note that the aforementioned results now take the form $\rho(n,K_{3}) = \lceil n/2 \rceil$, and 
    \[ \rho(n, K_{t+1}) \leq n^{\frac{\log{\log{t}}}{\delta \log{t}} } \]
    for some constant $\delta > 0$, $t$ sufficiently large, and an infinite number of values of $n$. Surprisingly, for $t$ fixed and $n$ large, it seems that the best known lower bound on $\rho(n, K_{t+1})$ is just what one gets from Ramsey theory: if $n > \binom{d+t}{t} \geq R(d+1,t+1)$ then any $K_{t+1}$-free graph on $n$ vertices has an independent set of size $d+1$, and therefore cannot have an orthonormal representation in $\R^d$. Since $\binom{d+t}{t} = O\!\left(d^t\right)$, we may conclude that $\rho(n, K_{t+1}) \geq \Omega(n^{1/t})$. Making use of Alon and Kahale's result \cite{AK98} that $\lambda(n,K_k) \leq O(n^{1-2/k}) $, we give a small improvement to this lower bound.
    \begin{theorem} \label{clique-rank}
        There exists a constant $\delta > 0$ such that for all $t \geq 3$ and $n \geq 1$, 
        $\rho(n, K_t) \geq \delta n^{3/t}$.
    \end{theorem}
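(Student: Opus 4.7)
The plan is to combine the inequality $\msr(G) \geq \vartheta(G)$ (due to Lov\'asz) with the sandwich $\vartheta(G)\vartheta(\overline{G}) \geq n$ and the Alon--Kahale bound $\lambda(n, K_t) \leq O(n^{1-2/t})$ to obtain a preliminary estimate, and then to improve this estimate via a geometric extraction argument on the orthonormal representation.

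Set $d = \msr(G)$ and fix an orthonormal representation $f \colon V(G) \to \R^d$ of $G$. The Gram matrix $M_f$ is positive semidefinite of rank at most $d$ with unit diagonal, so the trace-rank inequality gives $\lambda_1(M_f) \geq n/d$. By the first characterisation of the $\vartheta$-function, applied to $\overline{G}$, we have $\vartheta(\overline{G}) \geq \lambda_1(M_f) \geq n/d$; combined with Alon--Kahale's upper bound $\vartheta(\overline{G}) \leq C\,n^{1-2/t}$, this yields the preliminary estimate $d \geq (1/C)\,n^{2/t}$.

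To lift the exponent from $2/t$ to $3/t$, I would extract a highly structured induced subgraph of $G$. Take a unit vector $x \in \R^d$ achieving $\lambda_1(M_f) = \sum_v \inprod{x}{f(v)}^2 \geq n/d$; a standard averaging and sign-choice argument then produces $W \subseteq V(G)$ with $|W| \geq n/(4d)$ and $\inprod{x}{f(v)} \geq 1/\sqrt{2d}$ for all $v \in W$. Writing $f(v) = \alpha_v x + \beta_v y_v$ with $y_v$ a unit vector in $x^\perp$, the orthogonality $\inprod{f(u)}{f(v)} = 0$ for non-adjacent $u, v \in W$ forces $\inprod{y_u}{y_v} = -\alpha_u\alpha_v/(\beta_u\beta_v) \leq -1/(2d)$. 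Hence, for any independent set $I$ of $G[W]$, the vectors $\{y_v : v \in I\}$ are unit vectors in $\R^{d-1}$ with pairwise inner products at most $-1/(2d)$, and so by the classical bound on such configurations, $\alpha(G[W]) \leq 2d + 1$.

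The final step is to combine the resulting constraints on $G[W]$: it is $K_t$-free with $|W| \geq \Omega(n/d)$, $\msr(G[W]) \leq d$, and $\alpha(G[W]) = O(d)$. This combination is the main obstacle. Applying Alon--Kahale to $G[W]$ on its own yields only $d \geq \Omega(n^{2/(t+2)})$, while the Ramsey bound $|W| \leq R(O(d), t) \leq O(d^{t-1})$ gives only $d \geq \Omega(n^{1/t})$. I expect the missing factor of $n^{1/t}$ to come from iterating the spherical extraction inside $G[W]$ to peel off nested subsets with increasingly strong structural constraints, or from invoking a refinement of Alon--Kahale that exploits the independence-number bound. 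The base case $t = 3$ is handled separately by Rosenfeld's theorem $\rho(n, K_3) = \lceil n/2 \rceil$.
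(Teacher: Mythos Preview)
Your proposal has a genuine gap that you yourself flag: the ``final step'' never closes. You extract $W$ with $|W| \geq \Omega(n/d)$ and $\alpha(G[W]) = O(d)$, but neither Alon--Kahale applied to $G[W]$ nor Ramsey-type counting gets you past $n^{2/t}$ (respectively $n^{1/t}$), and your proposed remedies---iterating the spherical extraction or invoking an unspecified refinement of Alon--Kahale---are speculation, not argument. As written, the proof establishes only $\rho(n,K_t) \geq \Omega(n^{2/t})$.

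The paper closes the gap by a different and shorter route: induct on $t$ and work with $\tr(M^2)$ rather than with $\lambda_1(M)$. Writing
\[
\tr(M^2) = \sum_{u} \Bigl(1 + \sum_{w \in N(u)} \inprod{f(u)}{f(w)}^2 \Bigr),
\]
the inner sum is at most $\vartheta\bigl(\overline{G[N(u)]}\bigr)$ by the fourth characterisation of $\vartheta$ (take $x = f(u)$). Since $G$ is $K_{t+1}$-free, $G[N(u)]$ is $K_t$-free, so Alon--Kahale bounds this by $c\,|N(u)|^{1-2/t}$; and the induction hypothesis on $t$ gives $|N(u)| \leq (d/\delta)^{t/3}$, because $f$ restricted to $N(u)$ is an orthonormal representation in $\R^d$ of a $K_t$-free graph. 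Combining, $\tr(M^2) \leq n\bigl(1 + c(d/\delta)^{(t-2)/3}\bigr)$, and the trace--rank inequality $n^2 = \tr(M)^2 \leq d \cdot \tr(M^2)$ yields $n \leq (d/\delta)^{(t+1)/3}$, i.e.\ $d \geq \delta n^{3/(t+1)}$. The key idea you are missing is to apply Alon--Kahale not globally but \emph{locally} inside each neighbourhood $N(u)$, and to feed the inductive bound on $\rho(\cdot, K_t)$ back into $|N(u)|$; no geometric extraction is needed.
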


    In the previous section, we saw that another way to generalize a question for triangle-free graphs is to forbid a longer cycle. Pudl\'{a}k \cite{P02} (Theorem 10) gave a case-based proof showing that there exists $c > 0$ such that $\rho(n, C_5) \geq c n$. Taking $t-1$ copies of each vector of an orthonormal basis in $\R^d$ gives an orthonormal representation of the graph consisting of $d$ cliques of size $t-1$, which implies 
    \[ \rho(n,C_{t}) \leq \lceil n/(t-1) \rceil. \] 
    Inspired by Erd\H{o}s, we may ask if equality holds. Unlike before, we show that the answer turns out to be yes, in particular improving and generalizing Pudl\'{a}k's result.
    \begin{theorem} \label{cycle-rank}
        For all $t \geq 3, n \in \N$ we have $\rho(n,C_{t}) = \lceil n/(t-1) \rceil$.
    \end{theorem}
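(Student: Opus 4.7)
The upper bound $\rho(n,C_t)\le\lceil n/(t-1)\rceil$ is the construction already noted in the text. For the lower bound, let $f:V(G)\to\R^d$ be any orthonormal representation of a $C_t$-free graph $G$ on $n$ vertices, and let $M_f$ be its Gram matrix. The plan is to establish the trace bound $\tr(M_f^2)\le(t-1)n$; together with the Cauchy--Schwarz inequality applied to the eigenvalues of $M_f$,
\[ (\tr M_f)^2 \le \rk(M_f)\cdot\tr(M_f^2), \]
this gives $n^2\le d(t-1)n$, hence $d\ge n/(t-1)$, and thus $\msr(G)\ge\lceil n/(t-1)\rceil$, matching the upper bound.

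Expanding $\tr(M_f^2)=n+\sum_{v\in V(G)}\sum_{w\in N(v)}\inprod{f(v)}{f(w)}^2$, it will suffice to prove the per-vertex estimate
\[ \sum_{w\in N(v)}\inprod{f(v)}{f(w)}^2 \le t-2 \qquad \text{for every } v\in V(G). \]
This is the heart of the proof. The key structural observation is that $G[N(v)]$ contains no path on $t-1$ vertices as a subgraph, since such a path together with $v$ would complete a $C_t$ in $G$. By the Gallai--Roy--Vitaver theorem, any graph with no $P_{t-1}$ subgraph has chromatic number at most $t-2$; hence $N(v)$ admits a partition into independent sets $I_1,\ldots,I_{t-2}$ of $G$. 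For each $\alpha$, the vectors $\{f(w):w\in I_\alpha\}$ form an orthonormal family, since the vertices of $I_\alpha$ are pairwise non-adjacent in $G$ and are therefore mapped to pairwise orthogonal unit vectors. Bessel's inequality then yields $\sum_{w\in I_\alpha}\inprod{f(v)}{f(w)}^2\le\norm{f(v)}^2=1$, and summing over the $t-2$ colour classes gives the claimed per-vertex bound.

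The main obstacle is spotting the right structural input---that neighborhoods in $C_t$-free graphs are $P_{t-1}$-subgraph-free---and coupling Gallai--Roy--Vitaver with Bessel's inequality to convert this into the per-vertex estimate. Once this is identified, everything assembles cleanly through the trace inequality, and each step is tight on the extremal graph consisting of $\lceil n/(t-1)\rceil$ disjoint copies of $K_{t-1}$, confirming that the bound cannot be improved.
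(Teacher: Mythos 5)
Your proof is correct and takes essentially the same approach as the paper's: the paper derives this result as an immediate corollary of its more general Theorem~\ref{general-rank}, whose proof uses the same trace inequality $\tr(M)^2 \le \rk(M)\tr(M^2)$, the same observation that neighbourhoods exclude $P_{t-1}$ and are therefore $(t-2)$-colourable, and the same Parseval/Bessel argument to bound the per-vertex sum. The only cosmetic difference is that you invoke Gallai--Roy--Vitaver to obtain the chromatic bound, whereas the paper uses the slightly more general fact that a graph containing no copy of a fixed tree on $k$ vertices has chromatic number at most $k-1$, which is what lets it handle any connected $H$ that becomes a tree after deleting one vertex.
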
 
    \noindent Indeed, this follows from the following more general result, which holds for all connected graphs $H$ containing a vertex whose removal leaves a tree.
    \begin{theorem} \label{general-rank}
        Let $t \geq 1$ and let $H$ be a connected graph such that $V(H) = T \cup \{v\}$ where $H[T]$ is a tree on $t$ vertices. Then for all $n \geq 1$, $\rho(n,H) = \lceil n/t\rceil$.
    \end{theorem}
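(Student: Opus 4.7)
The upper bound follows from a standard construction: take $G$ to be a disjoint union of $\lceil n/t \rceil$ cliques each of size at most $t$. Since $H$ is connected on $t+1$ vertices, no copy of $H$ fits in a single clique, so $G$ is $H$-free; assigning all vertices of the $i$-th clique the $i$-th standard basis vector of $\R^{\lceil n/t\rceil}$ gives an orthonormal representation, so $\msr(G) \leq \lceil n/t\rceil$.

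For the lower bound, the plan is to prove by induction on $d$ that every orthonormal representation $f : V(G) \to \R^d$ of an $H$-free graph $G$ forces $|V(G)| \leq dt$. The base $d=0$ is immediate. For the inductive step, fix any $v \in V(G)$: the vectors $\{f(u) : u \notin N[v]\}$ are orthogonal to $f(v)$ and hence lie in $f(v)^\perp \cong \R^{d-1}$, giving an orthonormal representation of the $H$-free subgraph $G[V\setminus N[v]]$. The inductive hypothesis yields $|V\setminus N[v]| \leq (d-1)t$, so $|N[v]| \geq n-(d-1)t$. The same reasoning applied inside $\bigcap_i f(v_i)^\perp \cong \R^{d-k}$ gives the stronger bound $\bigl|\bigcup_i N[v_i]\bigr| \geq n-(d-k)t$ for every independent set $\{v_1,\ldots,v_k\}$ in $G$.

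Suppose for contradiction $n \geq dt+1$, so every vertex has degree at least $t$. Writing $V(H) = T \cup \{v_0\}$ with $H[T]$ a tree on $t$ vertices and $L = N_H(v_0) \subseteq T$ nonempty, the goal is to embed $H$ in $G$. When $H$ is itself a tree (that is, $|L|=1$), a standard greedy argument using minimum degree $\geq t$ yields such an embedding: pick any $v^* \in V(G)$ to play the role of $v_0$, place the root of $H[T]$ at any neighbour of $v^*$, and extend greedily. For the general case ($|L|\geq 2$), the plan is to use a Schur-complement reduction: for a carefully chosen $v$, projecting each $f(u)$ onto $f(v)^\perp$ and rescaling produces an orthonormal representation in $\R^{d-1}$ of the graph $G' := (G\setminus v) \cup K_{N_G(v)}$. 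Since $G'$ has $n-1 > (d-1)t$ vertices (whenever $t \geq 2$), the inductive hypothesis forces $G' \supseteq H$. Any ``new'' edge of $G'$, that is, one lying in $K_{N_G(v)}\setminus E(G)$, has both endpoints in $N_G(v)$; the idea is to lift the $H$-copy from $G'$ back to $G$ by letting $v$ play the role of a suitable vertex of $H$ adjacent to such new edges.

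The main obstacle is to make this lifting succeed: an $H$-copy in $G'$ may use several new edges simultaneously, and one must choose $v$ so that these new edges are arranged around a single vertex of the copy. I expect that a careful choice of $v$, exploiting both the tree structure of $H[T]$ and the enhanced bound $\bigl|\bigcup_i N[v_i]\bigr| \geq n-(d-k)t$ for independent sets (which gives strong information about common neighbourhoods, and in particular supplies enough common neighbours of $v$ and prospective embedded vertices), is enough to ensure that at least one $H$-copy in $G'$ admits the desired lifting. This yields the required copy of $H$ in $G$, contradicting $H$-freeness and completing the induction.
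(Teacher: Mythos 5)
Your upper bound matches the paper's (disjoint cliques of size $t$). For the lower bound, however, you take a fundamentally different route, and it has a genuine gap in the main case $|L|\geq 2$. The lifting step is asserted as an expectation, not proved, and I don't see how to carry it out as stated: the inductive hypothesis applied to $G' = (G\setminus v)\cup K_{N_G(v)}$ guarantees only that \emph{some} copy of $H$ lives in $G'$, and you have no control over which one. That copy may use several new edges (pairs in $N_G(v)$ that are non-edges of $G$) scattered across different vertices of the copy, in which case ``letting $v$ play the role of a suitable vertex'' fails: to swap $v$ in for an image $x$ of some $w\in V(H)$ you would need $v$ to be $G$-adjacent to \emph{all} images of $N_H(w)$, not just those joined to $x$ by new edges, and nothing in the construction guarantees this. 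The auxiliary bound $\bigl|\bigcup_i N[v_i]\bigr|\geq n-(d-k)t$ concerns independent sets in $G$, but the vertices of the $H$-copy in $G'$ need not be independent in $G$, so it is unclear how this tool would bear on the problem. There is also a technical snag in the Schur-complement step: if $f(u)=\pm f(v)$ for some $u\in N_G(v)$, projecting onto $f(v)^{\perp}$ kills $f(u)$ and it cannot be renormalised.

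By contrast, the paper avoids any embedding argument with a short spectral computation. It applies $\tr(M)^2\leq \rk(M)\,\tr(M^2)$ (Schnirelman's trick) to the Gram matrix $M=M_f$, writes $\tr(M^2)=\sum_u\bigl(1+\sum_{w\in N(u)}\inprod{f(u)}{f(w)}^2\bigr)$, and then bounds each inner sum by $t-1$: since $G[N(u)]$ contains no copy of the tree $T$ on $t$ vertices, $\chi(G[N(u)])\leq t-1$, so $N(u)$ splits into $t-1$ independent sets, each mapped by $f$ to an orthonormal family, and Parseval gives $\sum_{w\in B_i}\inprod{f(u)}{f(w)}^2\leq 1$. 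This yields $\tr(M^2)\leq nt$ and hence $n^2\leq d\cdot nt$, i.e.\ $d\geq n/t$. The key observation worth internalising is that forbidding a tree in each neighbourhood gives a chromatic-number bound, which gives a Parseval bound on the Gram matrix, with no need to actually locate a copy of $H$.
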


    \begin{rem}
        Our definition of $\msr(G)$ differs from the minimum semidefinite rank defined by Deaette \cite{D11}. Indeed, the representations $f : V(G) \rightarrow \C^d$ that he considers map into complex $d$-dimensional space, are allowed to map vertices to the 0 vector, and most importantly, must satisfy that $\inprod{f(u)}{f(v)} \neq 0$ if and only if $uv \in E(G)$. The last condition defines a \emph{faithful} representation, as studied by Lov\'{a}sz, Saks, and Schrijver \cite{LSS89}. Nevertheless,  \Cref{clique-rank,cycle-rank,general-rank} may be adapted to work with these alternate assumptions.
    \end{rem}
    
    We prove our results in the next two sections. We first prove \Cref{clique-rank,cycle-rank,general-rank} in \Cref{subsec:minrank}, and then proceed to prove the remaining results in \Cref{subsec:theta}. The final section of the paper contains some concluding remarks.

\section{Minimum semidefinite rank for $H$-free graphs} \label{subsec:minrank}

    To study the minimum semidefinite rank of a graph, we will need the following useful inequality. It goes back to \cite[p. 138]{B97} and its proof is based on a trick employed by Schnirelman in his work on Goldbach's conjecture \cite{S39}. For various combinatorial applications of this inequality, see, for instance, the survey by Alon \cite{A08}.

    \begin{lemma} \label{Schnirelmann}
        Let $M$ be a symmetric real matrix. Then $\tr(M)^2 \leq \rk(M)  \tr(M^2)$.
    \end{lemma}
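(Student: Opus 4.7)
The plan is to exploit the fact that $M$ is a real symmetric matrix, so by the spectral theorem it is orthogonally diagonalisable with real eigenvalues. Let $\lambda_1, \ldots, \lambda_n$ denote these eigenvalues, and let $r = \rk(M)$, so that exactly $r$ of the $\lambda_i$ are nonzero. Then $\tr(M) = \sum_{i=1}^n \lambda_i$ and $\tr(M^2) = \sum_{i=1}^n \lambda_i^2$, since the trace is invariant under orthogonal conjugation and $M^2$ has eigenvalues $\lambda_i^2$.

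Next I would apply the Cauchy--Schwarz inequality to the vector $(\lambda_i)_{i : \lambda_i \neq 0}$ paired with the all-ones vector of the same length $r$. This gives
\[
    \tr(M)^2 \;=\; \Bigl(\sum_{i : \lambda_i \neq 0} \lambda_i \cdot 1 \Bigr)^2 \;\leq\; \Bigl(\sum_{i : \lambda_i \neq 0} \lambda_i^2 \Bigr) \Bigl(\sum_{i : \lambda_i \neq 0} 1 \Bigr) \;=\; \tr(M^2) \cdot r,
\]
which is precisely the desired inequality. There is no real obstacle here: the only thing to be careful about is restricting the Cauchy--Schwarz sum to the support of the spectrum, since summing $1$ over all $n$ eigenvalues would give the weaker bound $\tr(M)^2 \leq n \tr(M^2)$ instead of one involving the rank. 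The symmetry hypothesis is used solely to guarantee a real spectral decomposition; without it one would have to worry about Jordan blocks and the identity $\tr(M^2) = \sum \lambda_i^2$ could fail.
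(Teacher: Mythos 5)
Your proof is correct and is essentially identical to the paper's: both diagonalise $M$ by the spectral theorem, restrict to the $r = \rk(M)$ nonzero eigenvalues, express $\tr(M)$ and $\tr(M^2)$ as power sums of those eigenvalues, and apply Cauchy--Schwarz against the all-ones vector of length $r$.
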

    
    \begin{proof}
        Let $r$ denote the rank of $M$. Since $M$ is a symmetric real matrix, $M$ has precisely $r$ non-zero real eigenvalues $\lambda_1,\dots,\lambda_{r}$. Note that $\tr(M) = \sum_{i=1}^r\lambda_i$ and $\tr(M^2) = \sum_{i=1}^r \lambda_i^2$. Application of Cauchy--Schwarz yields the desired $ (\sum_{i=1}^r \lambda_i)^2 \leq r\sum_{i=1}^r \lambda_i^2$.
    \end{proof}

    Now we are ready to prove \Cref{clique-rank} and \Cref{general-rank}. \Cref{cycle-rank} follows immediately from \Cref{general-rank}.

    \begin{proof}[Proof of \Cref{clique-rank}]
        Let $\delta$ be a sufficiently small constant. We proceed by induction on $t$. For $t = 3$ we know that $\rho(n, K_3) = \lceil n/2 \rceil \geq \delta n$. 

        Now let $t \geq 3$ and let $G$ be a $K_{t+1}$-free graph on $n$ vertices. Let $f : V(G) \rightarrow \R^d$ be an orthonormal representation of $G$ in $\R^d$ with $M = M_f$ being the corresponding Gram matrix. We will make use of \Cref{Schnirelmann}. To this end, we shall upper bound $\tr(M^2)$. We have
        \begin{align*} \label{eqn:tr-M-squared}
        \begin{split}
            \tr\!\left(M^2\right) 
            &= \sum_{u \in V(G)}{ \left( \inprod{f(u)}{f(u)}^2 + \sum_{w \in N(u)}{\inprod{f(u)}{f(w)}^2} +  \sum_{w \notin N(u) \cup \{u\}}{\inprod{f(u)}{f(w)}^2} \right) } \\
            &= \sum_{u \in V(G)}{\left(1 + \sum_{w \in N(u)}{\inprod{f(u)}{f(w)}^2} \right)}.
        \end{split}
        \end{align*}
        Now fix $u \in V(G)$ and note that $G[N(u)]$ is $K_t$-free. Thus by the induction hypothesis, we have $d \geq \rho(|N(u)|, K_t) \geq \delta |N(u)|^{3/t}$. Since Alon and Kahale \cite{AK98} showed that there exists a constant $c$ such that $\lambda(n,K_t) \leq c \, n^{1-2/t}$, we have via Definition \ref{defn:four} of the $\vartheta$-function that
        \begin{equation*} \label{eqn:inprod-x-nbd}
            \sum_{w \in N(u)}{\inprod{f(u)}{f(w)}^2} 
            \leq \vartheta\!\left(\overline{G[N(u)]} \right) 
            \leq \lambda\!\left(|N(u)|, K_t \right) 
            \leq \lambda\!\left( (d/\delta)^{\frac{t}{3}}, K_t \right)
            \leq c \left( (d/\delta)^{\frac{t}{3}}\right)^{1 - \frac{2}{t}} 
            = c \cdot (d/\delta)^{\frac{t-2}{3}}.
        \end{equation*}
        Therefore, we conclude that $\tr\!\left( M^2 \right) \leq n \left( 1 + c \cdot (d/\delta)^{(t-2)/3} \right)$. Clearly $\tr(M) = n$ and $\rk(M) \leq d$, so applying \Cref{Schnirelmann} and dividing by $n$ yields
        \[ 
            n \leq d \left( 1 + c \cdot (d / \delta)^{(t-2)/3} \right) = d + c \cdot \delta^{-(t-2)/3} d^{(t+1)/3} \leq (d/\delta)^{(t+1)/3} 
        \]
        for $\delta$ a bit smaller than $1/c$. Thus $d \geq \delta n^{3/(t+1)}$ and since $G$ and $f$ were arbitrary, we conclude
        \[ \rho(n, K_{t+1}) \geq \delta n^{3/(t+1)}. \qedhere\] 
    \end{proof}

    \begin{proof}[Proof of \Cref{general-rank}]
        Let $d = \lceil n/t \rceil$ and let $G$ be a graph consisting of $d$ cliques of size $t$. Since $H$ is connected and has $t+1$ vertices, $G$ is clearly $H$-free. By assigning the standard basis vector $e_i \in \R^d$ to each vertex in the $i$-th clique for $i \in [d]$, we obtain an orthonormal representation of $G$ in $\R^d$, so that we conclude $\rho(n, H) \leq d = \lceil n/t \rceil$.
    
        For the lower bound, let $d= \rho(n,H)$ and let $G$ be an $H$-free graph on $n$ vertices that has an orthonormal representation $f$ in $\R^d$ with corresponding Gram matrix $M = M_f$. Next, we will use \Cref{Schnirelmann}. Note that, as in the proof of \Cref{clique-rank}, 
        \begin{align*} 
            \tr\!\left(M^2\right) 
            &= \sum_{u \in V(G)}{\left(1 + \sum_{w \in N(u)}{\inprod{f(u)}{f(w)}^2} \right)}.
        \end{align*}
        
        Now fix $u \in V(G)$ and observe that since $G$ has no copy of $H$, $G[N(u)]$ has no copy of some tree on $t$ vertices. It is well-known that in this case, $\chi(G[N(u)]) \leq t-1$, see e.g.\ corollaries 1.5.4 and 5.2.3 of Diestel \cite{D12}. Thus we can partition $N(u)$ into $t-1$ independent sets $B_1, \ldots, B_{t-1}$. Since $\{f(w) : w \in B_i\}$ is an orthonormal set of vectors, we have by Parseval's inequality that $\sum_{w \in B_i}{\inprod{f(w)}{v}^2} \leq ||v||^2$ for any $v \in \R^d$. In particular for $v = f(u)$, we therefore have
        \[ 
            \sum_{w \in N(u)}{\inprod{f(u)}{f(w)}^2} 
            = \sum_{i=1}^{t-1}{\sum_{w \in B_i}{ \inprod{f(u)}{f(w)}^2 } } 
            \leq \sum_{i=1}^{t-1}{||f(v)||^2} = t-1, 
        \]
        and thus
        \[ 
            \tr \!\left( M^2 \right) \leq \sum_{v \in V(G)}\left(1 + t - 1 \right) = nt.
        \]
        Clearly we have $\tr(M) = n$ and $\rk(M) \leq d$, so that by \Cref{Schnirelmann} we obtain $n^2 \leq d n t$. Thus we conclude that $d \geq n/t$ and so $\rho(n,H) = d \geq \lceil n/t \rceil$, as desired.
    \end{proof}

\section{Lov{\'a}sz $\vartheta$-function for $H$-free graphs}  \label{subsec:theta}  
    
    \begin{proof}[Proof of \Cref{L-theta}]
        Let $f$ be an orthonormal representation of $G$ that attains the maximum in the definition of $L(G)$, and denote its Gram matrix by $M_f$. Let $\allonesvector$ denote the all 1's column vector (here and later all of our vectors will be column vectors). We have that 
        \[ 
            L(G)^2 = \left| \left| \sum_{v \in V(G)}{f(v)}\right| \right|^2 
            = \sum_{u,v \in V(G)} \inprod{f(u)}{f(v)} 
            = \allonesvector^\intercal M_f \allonesvector
            \leq n \vartheta\left(\overline{G}\right),
        \]
        where the last inequality follows from Definition \ref{defn:one} of the $\vartheta$-function.
        
        For the other direction, let $f^*$ be an orthonormal representation of $G$ and $x$ be a unit vector that together attain the minimum in Definition \ref{defn:three} of $\theta(G)$. We therefore have that $\vartheta(G) \geq \inprod{x}{f^*(v)}^{-2}$ for all $v \in V(G)$. By changing the sign of $f^*(v)$ if necessary, we can ensure that $\inprod{x}{f^*(v)} \geq \vartheta(G)^{-1/2}$ for all $v \in V(G)$, so that by Cauchy--Schwarz we obtain 
        \[ 
            L(G) \geq ||x|| \left| \left| \sum_{v \in V(G)}{f^*(v)} \right| \right| \geq \inprod{x}{\sum_{v \in V(G)}{f^*(v)}} \geq \frac{n}{ \sqrt{\vartheta(G)}}. 
        \]
        Moreover, Lov\'{a}sz \cite[Theorem 8]{L79} showed that every vertex-transitive graph $G$ satisfies
        $\vartheta(\overline{G})\vartheta(G) = n$, in which case the upper and lower bounds for $L(G)$ coincide. Thus if $G$ is vertex-transitive, we conclude 
        \[ L(G) = \sqrt{n \vartheta(\overline{G})}. \qedhere\]
    \end{proof}

    In order to prove \Cref{thm:odd-cycle-upper} about $C_{2t+1}$-free graphs, we will need the following result proved implicitly by Erd\H{o}s, Faudree, Rousseau, and Schelp \cite{EFRS78}. It allows us to bound the chromatic number of the set of vertices at a fixed distance from a given vertex, for any graph without a cycle of prescribed length.
    \begin{lemma} \label{lem:chi-bound}
        Let $G$ be a graph having no cycle of length $k$ and let $i \leq \lfloor (k-1)/2 \rfloor$. Fix a vertex $u_0$ in $G$ and define $A_i = \{ u \in V(G) : d(u,u_0) = i \}$ to be the set of vertices at a distance of exactly $i$ from $u_0$. Then the induced subgraph $G[A_i]$ satisfies $\chi(G[A_i]) \leq k-2$.
    \end{lemma}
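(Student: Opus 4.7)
The plan is to show that $G[A_i]$ contains no path on $k-1$ vertices; the bound $\chi(G[A_i]) \le k-2$ then follows from the Gallai--Roy theorem. The idea is that a long path in $A_i$, concatenated with shortest paths to $u_0$, should produce a long cycle, and by choosing a suitable sub-path one can make its length exactly $k$.

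Fix a BFS tree $T$ of $G$ rooted at $u_0$ with an arbitrary ordering of children at each internal node, let $\prec$ denote the induced DFS pre-order on $V(G)$, and for each $u$ write $P(u)$ for the unique $u_0$-$u$ path in $T$. Orient each edge of $G[A_i]$ from its $\prec$-smaller to its $\prec$-larger endpoint; by Gallai--Roy, it suffices to rule out a directed path on $k-1$ vertices in this orientation. Suppose such a path $v_1 \to v_2 \to \cdots \to v_{k-1}$ exists, and for each $s$ set $h_s = i - (\text{depth of } \mathrm{lca}_T(v_s, v_{s+1})) \in \{1, \ldots, i\}$. A standard property of DFS (provable by induction on $b - a$ using the three-vertex LCA identity in a tree together with the DFS-contiguity of subtrees) gives $\text{depth of } \mathrm{lca}_T(v_a, v_b) = \min_{a \le s < b} (\text{depth of } \mathrm{lca}_T(v_s, v_{s+1}))$ for all $a < b$.

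Let $H = \max_s h_s$, attained at some $s^*$, and set $m = k - 2H$. The bound $H \le i \le \lfloor (k-1)/2 \rfloor$ forces $m \in \{1, \ldots, k-2\}$, and a short case check provides a size-$m$ window $\{a, a+1, \ldots, a+m-1\} \subseteq \{1, \ldots, k-2\}$ containing $s^*$. The DFS identity then gives $i - (\text{depth of } \mathrm{lca}_T(v_a, v_{a+m})) = H$, so the closed walk obtained by following the subpath $v_a v_{a+1} \cdots v_{a+m}$, descending $P(v_{a+m})$ to $\mathrm{lca}_T(v_a, v_{a+m})$, and ascending $P(v_a)$ back to $v_a$ has length $m + 2H = k$. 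Its vertices are pairwise distinct (each vertex of $T$ is determined by its depth along any root-to-vertex path), so the walk is a genuine $C_k$ in $G$, contradicting the hypothesis.

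I expect the main obstacle to be combining the DFS LCA identity with the parity-matched choice $m = k - 2H$: together these ensure that the cycle length comes out to exactly $k$ rather than some nearby value. The remainder of the argument is routine bookkeeping with depths and window placement.
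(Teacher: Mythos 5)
Your argument is correct, and it goes a genuinely different route from the paper, which simply cites Erd\H{o}s, Faudree, Rousseau, and Schelp \cite{EFRS78} for the existence of a proper $(k-2)$-labelling of $A_i$ and gives no proof. You instead give a self-contained argument: take a BFS tree $T$ rooted at $u_0$, orient the edges of $G[A_i]$ along a DFS pre-order of $T$, and invoke Gallai--Roy, reducing the claim to showing there is no directed path on $k-1$ vertices. The heart of your proof is the observation that for a pre-order-increasing path $v_1 \prec \cdots \prec v_{k-1}$ in $A_i$, the DFS/LCA identity $\mathrm{depth}\,\mathrm{lca}(v_a,v_b)=\min_{a\le s<b}\mathrm{depth}\,\mathrm{lca}(v_s,v_{s+1})$ lets you choose a window of size $m=k-2H$ (with $H=\max_s h_s\le i\le\lfloor(k-1)/2\rfloor$, so $1\le m\le k-2$) around an index attaining the maximum $H$; the subpath over that window together with the two $T$-paths to $\mathrm{lca}(v_a,v_{a+m})$ then has length exactly $m+2H=k$. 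I verified the details: the bound $h_s\in\{1,\dots,i\}$, the existence of the window inside $\{1,\dots,k-2\}$, the length count $m+2H=k$, and the distinctness of the cycle's vertices (the $v$'s live at depth $i$ and are distinct since the orientation is acyclic; the two tree branches live at depths $<i$ and share only the LCA). The trade-off is that the paper's citation keeps the exposition short, while your argument makes the mechanism behind the constant $k-2$ explicit and self-contained, packaging the ``long path plus two BFS branches gives a $C_k$'' idea cleanly through Gallai--Roy rather than constructing a colouring by hand.
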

    
    \begin{proof}
        In the proof of Theorem 1 of \cite{EFRS78}, Erd\H{o}s, Faudree, Rousseau, and Schelp show that if $G$ does not contain a cycle of length $k$ and $i \leq \lfloor (k-1)/2 \rfloor$, then one can assign $k-2$ labels to the vertices of $A_i$ so that no two vertices having the same label are adjacent. Hence $\chi(G[A_i]) \leq k-2$.
    \end{proof}

    \begin{proof}[Proof of \Cref{thm:odd-cycle-upper}]
        Let $f$ be an orthonormal representation of $G$ maximizing the largest eigenvalue of the corresponding Gram matrix $M = M_f$. Let $\lambda_1 \geq \ldots \geq \lambda_n$ be the eigenvalues of $M$ and observe that by Definition \ref{defn:one} of the $\vartheta$-function, $\vartheta\left(\overline{G}\right) = \lambda_1$. Now note that $\tr(M^{2t+1}) = \sum_{i=1}^{n}{\lambda_i^{2t+1}}$ and that $\lambda_i \geq 0$ for all $i$ since $M$ is positive semidefinite. Thus we have $\lambda_1^{2t+1} \leq \sum_{i=1}^{n}{\lambda_i^{2t+1}} = \tr\left(M^{2t+1}\right)$, and hence $\vartheta\left(\overline{G}\right) \leq \tr\left(M^{2t+1}\right)^{1/(2t+1)}$. Therefore it will be enough for us to show that $\tr\left(M^{2t+1}\right) \leq (6t)^{2t} n$. 
        
        For convenience, given vertices $u_0, u_1, \ldots, u_k$, we define 
        \begin{equation*} \label{eqn:walk-notation}
            W(u_0, \ldots, u_k) 
            = \prod_{i=1}^{k}{\inprod{f(u_{i-1})}{f(u_i)}} 
            = f(u_0)^{\intercal} \left(\prod_{i=1}^{k-1} f(u_i) f(u_i)^{\intercal}\right) f(u_k)
        \end{equation*}   
        and note that $W(u_0, u_1, \ldots, u_{2t}, u_0) = 0$ whenever $u_0 u_1 \ldots u_{2t} u_0$ is not a closed walk in $G$, i.e.\ whenever one of the pairs $u_0 u_1, \ldots, u_{2t}u_0$ is a non-edge in $G$. Moreover, if $u_0 u_1 \ldots u_{2t} u_0$ form a closed walk in $G$, then $d(u_0, u_i) \leq t$ for all $i$, so if we define $N^t(u_0) = \{v \in G : d(v,u_0) \leq t\}$
        to be the set of vertices at a distance of at most $t$ from $u_0$, we obtain 
        \begin{align*}
            \tr\left(M^{2t+1}\right) 
            = \sum_{u_0, u_1, \ldots, u_{2t} \in V(G)} W(u_0, u_1, \ldots, u_{2t}, u_0) 
            = \sum_{u_0 \in V(G)} \sum_{u_1, \ldots, u_{2t} \in N^t(u_0)} W(u_0, u_1, \ldots, u_{2t}, u_0).
        \end{align*}
        Thus if we define 
        \[ Y(u_0) := \sum_{u_1, \ldots, u_{2t} \in N^t(u_0)} W(u_0, u_1, \ldots, u_{2t}, u_0)\]
        for $u_0 \in V(G)$, then it suffices for us to show that $Y(u_0) \leq (6t)^{2t}$ for all $u_0$, since we may then conclude 
        \[ \tr\left( M^{2t+1} \right) = \sum_{u_0 \in V(G)}{Y(u_0)} \leq (6t)^{2t} n.\]
        
        To bound $Y(u_0)$, we use \Cref{lem:chi-bound}. For any $r \leq t$, define $A_r = \{ u \in V(G) : d(u,u_0) = r \}$ to be the set of vertices at a distance of exactly $r$ from $u_0$. Since $G$ has no cycle of length $2t+1$, we have by \Cref{lem:chi-bound} that $\chi(G[A_r]) \leq 2t$, and so we let $\{B(r,1), \ldots, B(r, 2t)\}$ be a partition of $A_r$ into $2t$ independent sets. Note that for every closed walk $u_0 \ldots u_{2t} u_0$, if we let $d_i = d(u_0, u_i)$ denote the distance from $u_0$ to $u_i$, then $|d_{i+1} - d_i| \le 1$. Thus we obtain
        \[
            Y(u_0) 
            = \sum_{\substack{d_1, \ldots, d_{2t} : \\ d_1 = 1, \,\, |d_{i+1} - d_i| \le 1}} \,\, \sum_{a_1, \ldots, a_{2t} \in [2t]} \,\, \sum_{\substack{u_1, \ldots, u_{2t} : \\ u_i \in B(d_i, a_i)}} W(u_0, u_1, \ldots, u_{2t}, u_0) .
        \]
        Now since each $B(r,s)$ is an independent set, it follows that $\{ f(u) : u \in B(r,s) \}$ is an orthonormal set of vectors. Moreover, observe that $P_{r,s} := \sum_{u \in B(r,s)}f(u) f(u)^{\intercal}$ is precisely the orthogonal projection onto the subspace spanned by $\{ f(u) : u \in B(r,s) \}$. Thus for any $d_1,\ldots, d_{2t}$ such that $d_1 = 1, |d_{i+1}-d_i| \le 1$ for all $i$ and for any $a_1, \ldots, a_{2t} \in [2t]$, we have
        \begin{align*}
            \sum_{\substack{u_1, \ldots, u_{2t}: \\ u_i \in B(d_i, a_i)}} W(u_0, u_1, \ldots, u_{2t}, u_0) 
            &= \sum_{\substack{u_1, \ldots, u_{2t}: \\ u_i \in B(d_i, a_i)}} f(u_0)^{\intercal} \left(\prod_{i=1}^{2t} f(u_i) f(u_i)^{\intercal}\right) f(u_0) \\
            &= f(u_0)^{\intercal} \left(\prod_{i = 1}^{2t} \sum_{u_i \in B_{d_i, a_i}} f(u_i) f(u_i)^{\intercal}\right) f(u_0) \\
            &= f(u_0)^{\intercal} \left(\prod_{i=1}^{2t} P_{d_i, a_i} \right) f(u_0),
        \end{align*}
        and since any orthogonal projection $P$ satisfies $||Pv|| \leq ||v||$, we may apply Cauchy--Schwarz to obtain
        \begin{align*}
            f(u_0)^{\intercal} \left(\prod_{i=1}^{2t} P_{d_i, a_i} \right) f(u_0) 
            = \inprod{f(u_0)}{ \left(\prod_{i=1}^{2t} P_{d_i, a_i} \right) f(u_0) } 
            &\leq || f(u_0) || \left| \left| \left(\prod_{i=1}^{2t} P_{d_i, a_i} \right) f(u_0) \right| \right| \\
            &\leq || f(u_0) || || f(u_0) || \\
            &= 1.
        \end{align*}
        Since there are at most $3^{2t}$ sequences of integers $(d_1, \ldots, d_{2t})$ such that $d_1 = 1$ and $|d_{i+1} - d_i| \leq 1$ for all $i$, we therefore conclude
        \begin{align*}
            Y(u_0)
            &= \sum_{\substack{d_1, \ldots, d_{2t} \\ d_1 = 1, \,\, |d_{i+1} - d_i| \le 1}} \,\, \sum_{a_1, \ldots, a_{2t} \in [2t]} \sum_{\substack{u_1, \ldots, u_{2t}: \\ u_i \in B(d_i, a_i)}} W(u_0, u_1, \ldots, u_{2t}, u_0) \\
            &\leq \sum_{\substack{d_1, \ldots, d_{2t} \\ d_1 = 1, \,\, |d_{i+1} - d_i| \le 1 }} \,\, \sum_{a_1, \ldots, a_{2t} \in [2t]} 1 \\
            &\leq 3^{2t} (2t)^{2t} = (6t)^{2t}. \qedhere
        \end{align*}
    \end{proof}
    
    We now cite known constructions of $C_t$-free, $K_{2,t}$-free, or $K_{t,(t-1)!+1}$-free graphs with many edges and optimal spectral gaps, in order to obtain \Cref{thm:lower-bounds}. Note that some of the graphs described below have loops on some of their vertices, so to get a simple graph these loops should be removed. Since this only affects the adjacency matrix by subtracting a diagonal matrix with $1$s and $0$s on the diagonal, one can deduce from Weyl's interlacing inequality that the eigenvalues only change by at most 1, not affecting the asymptotic bounds obtained below.

    \begin{proof}[Proof of \Cref{thm:lower-bounds}]
    For the $C_{2t+1}$, $C_4$, $C_6$, $C_{10}$, and $K_{t,(t-1)!+1}$-free graph constructions and their spectral properties discussed below, see section 3 of the survey on pseudo-random graphs \cite{KS06} by Krivelevich and Sudakov.
    
    As previously mentioned, Alon and Kahale \cite{AK98} note that a modification of Alon's construction \cite{A94} gives a graph with an optimal spectral gap which is, in particular, $C_{2t+1}$-free for any fixed $t \geq 1$. For more details, see \cite{KS06} section 3, example 10. Indeed, for any $k$ such that $2^k - 1$ is not divisible by $4t + 3$, the construction yields a $2^{k-1}(2^{k-1}-1)$-regular graph $G$ on $n = 2^{(2t+1)k}$ vertices which is $C_{2t+1}$-free such that all eigenvalues of its adjacency matrix except the largest are bounded in absolute value by $O(2^k)$. The adjacency matrix $A$ of such a graph therefore has largest eigenvalue $\lambda_1(A) = 2^{k-1}(2^{k-1}-1)$ and all other eigenvalues bounded in absolute value by $O(2^k)$. Applying Definition \ref{defn:two} of $\vartheta\left(\overline{G}\right)$ to the adjacency matrix of $G$, and using the fact that the smallest eigenvalue of $G$ is negative (as the trace of the adjacency matrix is $0$), we thus conclude
    \[ \lambda(n, C_{2t+1}) \geq \vartheta\left(\overline{G}\right) \geq 1 + \frac{2^{k-1}(2^{k-1}-1)}{O\!\left(2^k\right)} = \Omega\left(n^{1/(2t+1)}\right).\]
    %\comment{Does the $O(2^k)$ bound on the nontrivial eigenvalues of the above construction depend on $t$?}
    
    The construction of a $C_4$-free graph $G$ with an optimal spectral gap and many edges comes from the projective space over a finite field of order $q = p^{\alpha}$ where $p$ is a prime and $\alpha$ is an integer, see \cite{KS06} section 3, example 6. It has $n = q^2 + q + 1$ vertices, is $(q+1)$-regular (so $\lambda_1 = q+1$), and all of its eigenvalues beside the largest are in absolute value equal to $\sqrt{q}$. Therefore, we obtain as above that 
    \[ \lambda(n, C_4) \geq \vartheta\left(\overline{G}\right) \geq 1 + \frac{q+1}{\sqrt{q}} = \Omega\!\left( n^{1/4} \right).\]
    The $C_6$-free graph and the $C_{10}$-free graph with optimal spectral gaps and many edges are the polarity graphs of a generalized $4$-gon and $6$-gon respectively, see \cite{KS06} section 3, example 7. As above, these graphs yield the bounds $\lambda(n, C_6) \geq \Omega\!\left( n^{1/6} \right)$ and  $\lambda(n, C_{10}) \geq \Omega\!\left( n^{1/10} \right)$.
     
    The $K_{t, (t-1)!+1}$-free graph $G$ with an optimal spectral gap and many edges is called a projective norm graph, see \cite{KS06} section 3, example 12. For a prime $p$, $G$ has $p^t - p^{t-1}$ vertices, is $(p^{t-1}-1)$-regular, and all eigenvalues besides the largest are in absolute value at most $p^{(t-1)/2}$. Thus we obtain
    \[ \lambda(n, K_{t,(t-1)!+1}) \geq \vartheta\left(\overline{G}\right) \geq 1 + \frac{p^{t-1}-1}{p^{(t-1)/2}} = \Omega\!\left( n^{\frac{1}{2}\left(1 - 1/t\right)} \right). \]
    
    The following construction of a $K_{2,t+1}$-free graph with many edges is due to F\"{u}redi \cite{F96}. As he did not show that this construction has an optimal spectral gap, we prove it below. Let $q$ be a prime power such that $t$ divides $q-1$ and let $\Fe$ be a finite field of order $q$. Let $h \in \Fe$ be an element of order $t$ and let $H = \{1, h, h^2, \ldots, h^{t-1}\}$. Define the equivalence relation on $\Fe \times \Fe \setminus \{(0,0)\}$ by $(a,b) \sim (a',b')$ iff there exists $c \in H$ such that $(a', b') = c \cdot (a, b)$. Let $\langle a,b \rangle$ denote the equivalence class of $(a,b)$ under the relation $\sim$. Now define $G$ to be the graph whose vertices are the equivalence classes $\left(\Fe \times \Fe \setminus \{(0,0)\} \right) / \sim$ such that there is an edge between $\langle a,b \rangle $ and $\langle a',b' \rangle $ iff $a a' + b b' \in H$.
    
    Each equivalence class has $t$ elements, and therefore $G$ has $n = (q^2 - 1)/t$ vertices. Moreover, for each vertex $(a,b) \in \Fe \times \Fe \setminus \{(0,0)\}$, there are $q$ solutions $(x,y)$ to the equation $ax + by = c$ for any $c \in H$, and therefore $\langle a,b \rangle$ has degree $tq/t = q$. 
    Now let $\langle a, b \rangle, \langle a', b' \rangle$ be a pair of distinct vertices and consider their common neighborhood. To determine its size, we must determine the number of solutions $(x,y)$ to the equations 
    \begin{align*}
        ax + by &= d\\
        a'x + b'y &= e 
    \end{align*}
    where $d,e \in H$. If there exists $c$ such that $a' = ca, b' = cb$, then the equations have no solutions, since otherwise we would have $e = c a x + c b y = c d$, which would imply that $c \in H$, contradicting the fact that $\langle a,b \rangle \neq \langle a',b' \rangle$. Thus $\langle a,b \rangle$ and $\langle a',b' \rangle$ have no common neighbors in this case. Otherwise if there does not exist $c$ such that $a' = ca, b' = cb$, then the matrix 
    $\begin{pmatrix}
    a & b\\
    a' & b'
    \end{pmatrix}$
    is invertible and hence the system of equations has a unique solution $(x,y)$ for each choice of $d,e \in H$. As there are $t^2$ choices for $d$ and $e$, we obtain a total of $t^2$ solutions, which implies that there are $t^2 / t = t$ vertices in the common neighborhood of $\langle a,b \rangle$ and $\langle a', b' \rangle$. Thus $G$ has no copy of $K_{2,t+1}$.
    
    Now let $A$ be the adjacency matrix of $G$, indexed by the vertices $\langle a, b \rangle$, and consider $A^2$. Since $G$ is $q$-regular, the diagonal entries of $A^2$ will all be $q$.  The off-diagonal entry $A^2_{\langle a, b\rangle, \langle a', b' \rangle}$ is the number of common neighbors of $\langle a, b\rangle$ and $\langle a', b' \rangle$, which by the previous discussion is either $0$ or $t$ depending on whether or not there exists $c$ such that $a' = ca, b' = cb$. Thus if we let $Q$ be the $\{0,1\}$ matrix indexed by the vertices of $G$ so that $Q_{\langle a, b\rangle, \langle a', b' \rangle} = 1$ iff $\langle a, b\rangle$ and $\langle a', b' \rangle$ have no common neighbors, then we have 
    \[ A^2 = (q - t)I + t J - tQ\]
    where $I$ is the identity matrix and $J$ is the all-ones matrix. Now for any given $\langle a, b \rangle$, observe that we must have $c \in (\Fe \setminus \{0\}) \setminus H$ in order for $a' = ca, b' = cb$ to yield $(a',b') \neq (0,0)$ such that $\langle a',b' \rangle \neq \langle a,b \rangle$. This gives $q - 1 - t$ choices for $c$ and therefore there are exactly $(q-1-t)/t$ many vertices $\langle a', b' \rangle$ that have no common neighbors with $\langle a,b \rangle$, so that $Q$ is a matrix with $(q-1-t)/t$ ones in each row. By the Perron--Frobenius theorem, the largest eigenvalue of $Q$ is $\lambda_1(Q) = (q-1-t)/t$ with eigenvector $\allonesvector$, and all other eigenvalues satisfy $|\lambda_i(Q)| \leq \lambda_1(Q)$ and have eigenvectors which are orthogonal to $\allonesvector$. $J$ has largest eigenvalue $\lambda_1(J) = n$ also with the eigenvector $\allonesvector$ and any vector orthogonal to $\allonesvector$ is an eigenvector of $J$ with eigenvalue $0$. Therefore, any eigenvector of $Q$ is also an eigenvector of $A^2$ which implies that for all $i \geq 2$,
    \[ |\lambda_i(A^2)| \leq q - t + t \cdot \frac{q-1-t}{t} = 2q - 2t - 1.\]
    Now since $G$ is $q$-regular, the largest eigenvalue of $A$ is $q$, and all other eigenvalues are square roots of eigenvalues of $A^2$. Thus we conclude 
    \[ \max_{i\geq2} |\lambda_i(A)| \leq \sqrt{2q - 2t - 1}. \]
    Finally, applying Definition \ref{defn:two} of $\vartheta\left( \overline{G} \right)$ with the matrix $A$, we obtain
    \[ \lambda(n, K_{2,t+1}) \geq \vartheta\!\left(\overline{G}\right) \geq 1 - \frac{\lambda_1(A)}{\lambda_n(A)} \geq 1 + \frac{q}{\sqrt{2q - 2t - 1}} = \Omega\!\left( t^{1/4} n^{1/4} \right). \qedhere\]
    \end{proof}

    We now give a proof of \Cref{thm:Turan-upper-bound}, using an approach similar to that which was used by Alon and Kahale to prove $\lambda(n, K_{t}) \leq O(n^{1 - 2/t})$ in \cite{AK98}.
    
    \begin{proof}[Proof of \Cref{thm:Turan-upper-bound}]
        We proceed by induction on $n$. For $n=1$ the claim holds trivially. Now suppose $n \geq 2$ and let $G$ be an $H$-free graph on $n$ vertices. Define $U = \{ v \in V(G) : d(v) \leq 4 c \cdot n^{\alpha} \}$ and $W = V(G) \backslash U$. It follows from Definition \ref{defn:four} of the $\vartheta$-function that
        $\vartheta\!\left(\overline{G}\right) \leq \vartheta\!\left(\overline{G[U]}\right) + \vartheta\!\left(\overline{G[W]}\right)$.
        Moreover, observe that
        \[ 4 c \cdot n^{\alpha} |W| \leq \sum_{v \in W}{d(v)} \leq 2 \ex(n, H) \leq 2 c \cdot n^{1+\alpha}, \]
        so $|W| \leq n/2$, and hence by the induction hypothesis \[
        \vartheta\!\left(\overline{G[W]}\right) \leq \lambda(n/2,H) \leq 20 \cdot \frac{\sqrt{c h}}{\alpha} \cdot \!\left( \frac{n}{2} \right)^{\alpha/2}.
        \]
        
        It remains to bound $\vartheta\!\left(\overline{G[U]}\right)$. To this end let $f$ be an orthonormal representation of $G[U]$ maximizing the largest eigenvalue $\lambda_1(M)$ of the corresponding Gram matrix $M = M_f$. By Definition \ref{defn:one} of the $\vartheta$-function, we have $\vartheta\!\left(\overline{G[U]}\right) = \lambda_1(M)$. Now fix $u \in U$ and define $N'(u) = \{ w \in U : uw \in E(G) \}$ to be the neighborhood of $u$ in $G[U]$. Since $G[U]$ has no copy of $H$, we have that $N'(u)$ induces no copy of the tree $T$. Therefore, by the same argument as in the proof of \Cref{general-rank}, $N'(u)$ can be partitioned into at most $h$ independent sets, each corresponding to a set of orthonormal vectors. Thus by Parseval's inequality, $\sum_{w \in N'(u)}{\inprod{f(u)}{f(w)}^2} \leq h$. Since $|N'(u)| \leq d(u) \leq 4c \cdot n^{\alpha}$, we conclude via Cauchy--Schwarz that $\sum_{w \in N'(u)}{|\inprod{f(u)}{f(w)}|} \leq \sqrt{4c \cdot n^{\alpha} h}$. Note that $\lambda_1(M) \leq \max_{u \in U}{\sum_{w \in U}{|\inprod{f(u)}{f(w)}|}}$, and thus
        \[
            \vartheta\!\left(\overline{G[U]}\right) 
            \leq \max_{u \in U}{\sum_{w \in U}{|\inprod{f(u)}{f(w)}|}} 
            = \max_{u \in U}{\!\left(1 + \sum_{w \in N'(u)}{|\inprod{f(u)}{f(w)}|}\right)} 
            \leq 1 + \sqrt{4c \cdot n^{\alpha} h} 
            \leq 3 \sqrt{c \cdot n^{\alpha} h}.
        \]
        Putting everything together, we have
        \[ 
            \vartheta\!\left(\overline{G}\right) \leq \vartheta\!\left(\overline{G[U]}\right) + \vartheta\!\left(\overline{G[W]}\right) 
            \leq 3 \sqrt{c \cdot n^{\alpha} h} + 20 \cdot \frac{\sqrt{ch}}{\alpha} \cdot \left( \frac{n}{2} \right)^{\alpha/2} 
            = \sqrt{c \cdot n^{\alpha} h} \cdot \!\left(3 + \frac{20}{\alpha} \cdot \!\left( \frac{1}{2}\right)^{\alpha/2} \right).
        \]
        Now to complete the proof, we use the fact that $e^{-x} \leq 1 - x/2$ for $0\leq x \leq 1$ to conclude
        \[ 
            3 + \frac{20}{\alpha} \cdot \!\left( \frac{1}{2}\right)^{\alpha/2} 
            \leq 3 + \frac{20}{\alpha} \cdot \!\left(1 - \frac{\ln(2) \alpha}{4} \right)
            \leq \frac{20}{\alpha}. \qedhere
        \] 
    \end{proof}
    
    \Cref{cor:Turan-upper-bound} will now follow from known upper bounds on Tur\'{a}n numbers.
    
    \begin{proof}[Proof of \Cref{cor:Turan-upper-bound}]
        Recently, Bukh and Tait \cite{BT18} showed that $\ex\!\left(n, \theta_{t,s}\right) \leq O\!\left(t s^{1 - 1/t} n^{1 + 1/t}\right)$, generalizing the well-known upper bounds $\ex\left(n, C_{2t}\right) \leq O\left(t n^{1 + 1/t}\right)$ due to Bondy and Simonovits \cite{BS74}, and $\ex\left(n, K_{2,t}\right) \leq O\left(t n^{1 + 1/t}\right)$ due to F\"{u}redi \cite{F96}. Since $\theta_{t,s}$ consists of a tree together with an additional vertex, we may apply \Cref{thm:Turan-upper-bound} to obtain the desired upper bounds on $\lambda(n, H)$.
    \end{proof}
    \begin{rem}
        Bukh and Jiang \cite{BJ17} recently improved the upper bound on $\ex(n, C_{2t})$ to $O\!\left(\sqrt{t} \log{t} \, n^{1 + 1/t}\right)$ for $n$ sufficiently large relative to $t$. Using \Cref{thm:Turan-upper-bound}, this implies $\lambda(n, C_{2t}) \leq O\!\left( t^{7/4} \sqrt{\log{t}} \, n^{1/(2t)}\right)$. Nonetheless, in the appendix we show how to obtain the better bound $\lambda(n, C_{2t}) \leq O\!\left(t \, n^{1/(2t)}\right)$ via a different argument.
    \end{rem}

    \Cref{thm:bipartite-upper-bound} will follow from an argument similar to that of \Cref{thm:Turan-upper-bound}, except that we will have to replace the result that the chromatic number of a neighborhood is bounded, with a bound on the $\vartheta$-function of a neighborhood which will be obtained inductively.
    
    \begin{proof}[Proof of \Cref{thm:bipartite-upper-bound}]
        We proceed by induction on $n$ and $t$, where $c_{t,s}$ will be defined recursively. For $s \geq t=2$, let $c_{2,s}$ be the constant such that $\lambda(n, K_{2,s}) \leq c_{2,s} s^{3/4} n^{1/4}$ as given by \Cref{cor:Turan-upper-bound}. Now suppose $s \geq t \geq 3$. For $n = 1$, the claim trivially holds for $c_{t,s} \geq 1$. Now let $n \geq 2$.
        
        K\"{o}vari, S\'{o}s, and Tur\'{a}n \cite{KST54} showed that there exists a constant $a_{t,s}$ such that $\ex(n, K_{t,s}) \leq a_{t,s} n^{2 - 1/t}$. As in the proof of \Cref{thm:Turan-upper-bound}, define $U = \{ v \in V(G) : d(v) \leq 4 a_{t,s} n^{1 - 1/t} \}$, $W = V(G) \backslash U$, and observe that by Definition \ref{defn:four} of the $\vartheta$-function,
        $\vartheta\!\left(\overline{G}\right) \leq \vartheta\!\left(\overline{G[U]}\right) + \vartheta\!\left(\overline{G[W]}\right)$.
        Moreover, observe that
        \[ 
            4 a_{t,s} n^{1 - 1/t} |W| \leq \sum_{v \in W}{d(v)} \leq 2 \ex(n, K_{t,s}) \leq 2 a_{t,s} n^{2 - 1/t}, 
        \]
        so $|W| \leq n/2$, and hence by the induction hypothesis 
        \[
            \vartheta\!\left(\overline{G[W]}\right) \leq \lambda(n/2,K_{s,t}) \leq c_{t,s} \left(\frac{n}{2}\right)^{1 - 2/t + 1/\left(t \, 2^{t-1}\right)}.
        \]
         To bound $\vartheta\!\left(\overline{G[U]}\right)$, let $f$ be an orthonormal representation of $G[U]$ maximizing the largest eigenvalue $\lambda_1(M)$ of the corresponding Gram matrix $M = M_f$, so that we have $\vartheta\!\left(\overline{G[U]}\right) = \lambda_1(M)$. Now fix $u \in U$ and let $N'(u) = \{ w \in U : uw \in E(G) \}$ be the neighborhood of $u$ in $G[U]$. Note that $G[U]$ has no copy of $K_{t-1,s}$, so that via Definition \ref{defn:four} of the $\vartheta$-function and induction, we have 
         \[
             \sum_{w \in N'(u)}{\inprod{f(u)}{f(w)}^2} \leq \vartheta\!\left(\overline{G[U]}\right) \leq \lambda(|N'(u)|, K_{t-1,s}) \leq c_{t-1,s} \, |N'(u)|^{1 - 2/(t-1) + 1/\left((t-1) \, 2^{t-2}\right)}.
         \]
         Thus using the fact that $|N'(u)| \leq 4 a_{t,s} n^{1-1/t}$ and applying Cauchy--Schwarz, we conclude
         \begin{align*}
            \sum_{w \in N'(u)}{|\inprod{f(u)}{f(w)}|} 
            &\leq \sqrt{|N'(u)| \cdot c_{t-1,s} \cdot |N'(u)|^{1 - 2/(t-1) + 1/\left((t-1) \, 2^{t-2}\right)}}\\
            &\leq 4\sqrt{c_{t-1,s}} \cdot  a_{t,s}^{1 - 1/(t-1) + 1/\left((t-1) \, 2^{t-1}\right)} \cdot n^{1 - 2/t + 1/\left(t \, 2^{t-1}\right)} .
        \end{align*}
         As in the proof of \Cref{thm:Turan-upper-bound}, we therefore obtain
        \begin{align*}
            \vartheta\!\left(\overline{G[U]}\right) 
            \leq  \max_{u \in U}{\sum_{w \in U}{|\inprod{f(u)}{f(w)}|}} 
            &= \max_{u \in U}{\!\left(1 + \sum_{w \in N'(u)}{|\inprod{f(u)}{f(w)}|}\right)}\\
            &\leq \!\left(1 + 4\sqrt{c_{t-1,s}} \cdot a_{t,s}^{1 - 1/(t-1) + 1/\left((t-1) \, 2^{t-1}\right)} \right) n^{1 - 2/t + 1/\left(t \, 2^{t-1}\right)}.
        \end{align*}
        Thus if we set 
        \[ 
            c_{t,s} = \frac{1 + 4\sqrt{c_{t-1,s}} \cdot a_{t,s}^{1 - 1/(t-1) + 1/\left((t-1) \, 2^{t-1}\right)}}{1 - (1/2)^{1 - 2/t + 1/\left(t \, 2^{t-1}\right)}},
        \]
        then we conclude the desired result
        \[ 
            \vartheta\!\left(\overline{G}\right) \leq \vartheta\!\left(\overline{G[U]}\right) + \vartheta\!\left(\overline{G[W]}\right) 
            \leq c_{t,s} \, n^{1 - 2/t + 1/\left(t \, 2^{t-1}\right)}. \qedhere
        \]
    \end{proof}
    
\section{Concluding remarks}

We have seen that for $H \in \{C_{2t+1}, C_4, C_6, C_{10}, K_{2,t}\}$ fixed and $n$ large, \Cref{thm:lower-bounds} and \Cref{cor:Turan-upper-bound} provide bounds on $\lambda(n,H)$ that are asymptotically tight. However, the lower bound in \Cref{thm:lower-bounds} for $\lambda(n, K_{t,s})$ with $s \geq t\geq 3$ does not match the upper bound obtained in \Cref{thm:bipartite-upper-bound}, so determining the correct asymptotic dependence on $n$ is an interesting problem. Indeed, for $n \gg t \rightarrow \infty$, we have
\[
    1/2 - o(1)\leq \log_{n}{\lambda(n, K_{t,s})} \leq 1 - o(1),
\]
where the lower bound is coming from graphs with optimal spectral gaps that are almost extremal for the Tur\'{a}n number, so that we cannot hope to do better with such constructions. On the other hand, we know 
\[
    1 - o(1) \leq \log_{n}{\lambda(n, K_{t})} \leq 1 - o(1),
\]
for $n \gg t \rightarrow \infty$, and it would therefore be interesting to determine if the asymptotic behavior of $\lambda(n, H)$ is different for $H = K_t$ versus $H = K_{t,s}$.

For $H = K_{2,t}$, even though we know the asymptotic behavior of $\lambda(n, H)$, we are only able to show that
\[ 
    \Omega\!\left(t^{1/4} n^{1/4}\right) \leq \lambda(n, K_{2,t}) \leq O\!\left(t^{3/4} n^{1/4}\right),
\] 
so it would be interesting to determine the correct dependence of $\lambda(n, K_{2,t})$ on $t$.

\vspace{0.25cm}
\noindent
{\bf Acknowledgment.}\,
We would like to thank Boris Bukh and Chris Cox for stimulating discussions.

\appendix
\section*{Appendix}

Here we give an improved bound for $\lambda(n, C_{2t})$ using the same approach as in \Cref{thm:odd-cycle-upper}. The argument is more complicated because \Cref{lem:chi-bound} does not work for vertices at a distance of $t$ from a given vertex.

\begin{theorem} \label{thm:even-cycle-upper}
    For all $t \ge 2$, we have $\lambda(n, C_{2t}) \leq  12 t \, n^{1/(2t)}$.
\end{theorem}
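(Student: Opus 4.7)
The plan is to mimic the proof of \Cref{thm:odd-cycle-upper} with $\tr(M^{2t})$ in place of $\tr(M^{2t+1})$. Let $G$ be a $C_{2t}$-free graph, let $f$ be an orthonormal representation of $G$ maximising $\lambda_1(M_f)$, and set $M = M_f$, so that $\vartheta(\overline{G}) = \lambda_1(M)$. Since $M$ is positive semidefinite, $\vartheta(\overline{G})^{2t} \le \tr(M^{2t}) = \sum_{u_0 \in V(G)} Y(u_0)$, where $Y(u_0)$ is the weighted sum of $W(u_0, u_1, \ldots, u_{2t-1}, u_0)$ over all sequences $(u_1, \ldots, u_{2t-1})$; it suffices to show $Y(u_0) \le (12t)^{2t}$ for every $u_0$. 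As in \Cref{thm:odd-cycle-upper}, I partition the closed walks by their distance profile $d_i = d_G(u_0, u_i)$, which satisfies $d_0 = d_{2t} = 0$, $|d_{i+1} - d_i| \le 1$, and $d_i \le \min(i, 2t-i) \le t$. Crucially, $d_i = t$ can only occur at $i = t$, in which case the entire profile is forced to be the symmetric sequence $(0, 1, 2, \ldots, t, t-1, \ldots, 1, 0)$.

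For walks with $\max d_i \le t-1$, \Cref{lem:chi-bound} applied with $k = 2t$ gives $\chi(G[A_r]) \le 2t - 2$ for every $r \le t-1$; partitioning each $A_r$ into $2t-2$ independent sets $B(r,1), \ldots, B(r, 2t-2)$ and running the orthogonal projection argument of \Cref{thm:odd-cycle-upper} verbatim produces a contribution of at most $3^{2t-1}(2t-2)^{2t-1}$. The remaining walks, with $d_t = t$, contribute
\[ Y_2(u_0) = \sum_{u_t \in A_t} \phi(u_0, u_t)^2, \qquad \phi(u_0, u_t) = f(u_0)^{\intercal} \tilde P_1 \tilde P_2 \cdots \tilde P_{t-1} f(u_t), \]
where $\tilde P_r = \sum_{u \in A_r} f(u) f(u)^{\intercal}$, and $\phi(u_0, u_t)$ is a weighted sum over shortest $u_0$-to-$u_t$ paths. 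The key structural input here is $C_{2t}$-freeness: any two internally disjoint shortest $u_0$-$u_t$ paths would form a $C_{2t}$. Applying Menger's theorem to the subgraph of $G$ consisting of the edges lying on some shortest $u_0$-$u_t$ path therefore produces a single cut vertex $v^*(u_t) \in A_r$ for some $r = r(u_t) \in \{1, \ldots, t-1\}$ through which every shortest $u_0$-$u_t$ path must pass, yielding the factorisation $\phi(u_0, u_t) = \phi_L(u_0, v^*(u_t)) \cdot \phi_R(v^*(u_t), u_t)$, where $\phi_L, \phi_R$ are the analogous sums over shortest paths in the two halves. Regrouping by $v^*$ and over-counting by dropping the constraint $v^*(u_t) = v$,
\[ Y_2(u_0) \le \sum_{r=1}^{t-1} \sum_{v \in A_r} \phi_L(u_0, v)^2 \sum_{u_t \in A_{t-r}(v)} \phi_R(v, u_t)^2. \]
Now from $v$'s viewpoint the layer $A_{t-r}(v)$ has index $t - r \le t - 1 = \lfloor (2t-1)/2 \rfloor$, so \Cref{lem:chi-bound} applied at $v$ does give $\chi(G[A_{t-r}(v)]) \le 2t-2$. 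Combined with $\|\tilde P_j\| \le 2t - 2$ for every $j \le t-1$, this yields $\sum_{v \in A_r} \phi_L(u_0, v)^2 \le (2t-2)^{2r-1}$ and $\sum_{u_t \in A_{t-r}(v)} \phi_R(v, u_t)^2 \le (2t-2)^{2(t-r)-1}$, hence $Y_2(u_0) \le (t-1)(2t-2)^{2t-2}$. Together, $Y(u_0) \le 3^{2t-1}(2t-2)^{2t-1} + (t-1)(2t-2)^{2t-2} \le (12t)^{2t}$, which completes the plan.

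The main obstacle is handling the walks with $d_t = t$: from $u_0$'s viewpoint the layer $A_t$ has no chromatic bound available from \Cref{lem:chi-bound} (since $t > \lfloor (2t-1)/2 \rfloor$), so the clean orthogonal projection argument of \Cref{thm:odd-cycle-upper} fails as stated. The proposed resolution is to \emph{shift the base point} from $u_0$ to the Menger cut vertex $v^* \in A_r$ with $r \ge 1$, whose existence is guaranteed by $C_{2t}$-freeness; from $v^*$'s perspective the previously problematic vertices $u_t$ now lie in a layer $A_{t-r}(v^*)$ at distance at most $t - 1$ from $v^*$, to which \Cref{lem:chi-bound} does apply, and the standard projection argument can be carried through on both halves of the path.
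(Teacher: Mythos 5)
Your proof is correct, and it takes a genuinely different route from the paper's for the hard part of the argument. Both proofs reduce to bounding $\tr(M^{2t})$, both split the closed walks according to whether the distance profile stays within $N^{t-1}(u_0)$ or reaches distance exactly $t$ at the midpoint, and both handle the first class via the projection argument of \Cref{thm:odd-cycle-upper} verbatim (with $2t-2$ colours per layer, since \Cref{lem:chi-bound} applies up to layer $t-1$). The difference is in the second class. The paper observes that $C_{2t}$-freeness forces some $u_i = u_{2t-i}$, expands the indicator of this event by inclusion--exclusion over the set $I$ of repeated positions, and for each $I$ the restricted sum $Z_I(u_0)$ factors into a product of nonnegative terms $S(u_0,u_{\ell},u_k)$, each a square of a shorter path-sum, which can then be bounded segment by segment. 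You instead extract a single Menger cut vertex $v^* = v^*(u_t)$ on every geodesic from $u_0$ to $u_t$ (using $C_{2t}$-freeness to rule out two internally disjoint geodesics), factor $\phi(u_0,u_t) = \phi_L(u_0,v^*)\phi_R(v^*,u_t)$ through that vertex, drop the constraint $v^*(u_t)=v$ to over-count, and then bound each half using \Cref{lem:chi-bound} from the appropriate base point ($u_0$ for the left half, $v$ for the right half), together with the operator-norm bound $\|\tilde P_j\| \le 2t-2$. The paper's inclusion--exclusion buys you a clean positivity structure (each $Z_I \ge 0$) so you never need to track signs; your Menger argument buys a more transparent combinatorial explanation of why the midpoint layer is controllable — it is ``seen'' at distance $t-r \le t-1$ from a cut vertex on a deeper layer. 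Your final constants work out: $3^{2t-1}(2t-2)^{2t-1} + (t-1)(2t-2)^{2t-2} < (6t)^{2t} + (2t)^{2t} < 2(6t)^{2t} \le (12t)^{2t}$.

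One point worth tightening: you should apply Menger's theorem in the \emph{directed} geodesic DAG (layers $A_0,\dots,A_t$ with arcs from $A_{i}$ to $A_{i+1}$ along geodesic edges), where every directed $u_0$--$u_t$ path automatically has length $t$ and is a geodesic, so two internally disjoint paths immediately yield a $C_{2t}$. If you instead invoke undirected Menger on the subgraph of geodesic edges, the two disjoint paths it produces may zig-zag between layers and have length greater than $t$, in which case their union need not be a $C_{2t}$, and you would have to separately argue that they can be straightened into geodesics. Using the directed version sidesteps this entirely.
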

\begin{proof}
    Let $f$ be an orthonormal representation of $G$ maximizing the largest eigenvalue of the corresponding Gram matrix $M = M_f$. As in the proof of \Cref{thm:odd-cycle-upper}, it will suffice to show that $\tr(M^{2t}) \le (12t)^{2t} n$. Recall the notations $W(u_0, \ldots, u_k)$ and $N^t(u_0)$ introduced in the proof of \Cref{thm:odd-cycle-upper}. We have
    \begin{align*}
        \tr\left(M^{2t}\right) 
        &= \sum_{u_0, u_1, \ldots, u_{2t-1} \in V(G)} W\!\left(u_0, \ldots, u_{2t-1}, u_0\right),
    \end{align*}
    where $W\!\left(u_0, \ldots, u_{2t-1}, u_0\right) = 0$ unless $u_0 u_1 \ldots u_{2t-1} u_0$ forms a closed walk in $G$. Moreover, since $G$ is $C_{2t}$-free, any such closed walk must satisfy $u_i = u_j$ for some $0 \le i < j \le 2t-1$. Observe that this can happen either if $u_1, \ldots, u_{2t-1} \in N^{t-1}(u_0)$, or if $d(u_0, u_i) = d(u_0, u_{2t-i}) = i$ for every $i \in [t]$, and $u_i = u_{2t-i}$ for some $i \in [t-1]$. Thus if we define
    \begin{align*} 
        Y(u_0) & = \sum_{u_1, \ldots, u_{2t-1} \in N^{t-1}(u_0)} W\!\left(u_0, \ldots, u_{2t-1}, u_0\right), \\
        Z(u_0) & = 
            \sum_{\substack{ 
                u_1, \ldots, u_{2t-1} : \\
                d(u_0, u_i) \,=\, d(u_0, u_{2t-i}) \,=\, i \,\, \forall i \in [t], \\
                \text{$u_i = u_{2t-i}$ for some $i \in [t-1]$}.
            }} W\!\left(u_0, \ldots, u_{2t-1}, u_0\right),
    \end{align*}
    then we have
    \begin{align*}
        \tr\left(M^{2t}\right) 
        &= \sum_{u_0 \in V(G)} \left(Y(u_0) + Z(u_0)\right).
    \end{align*}
    We will show that $Y(u_0) \le (6t)^{2t}$ and $Z(u_0) \le (4t)^{2t}$ for every $u_0 \in V(G)$, which will complete the proof of the theorem.
    
    To prove that $Y(u_0) \le (6t)^{2t}$ for every vertex $u_0$, one can repeat the argument from the proof of \Cref{thm:odd-cycle-upper}. We now turn to the task of upper-bounding $Z(u_0)$. For non-empty $I \subseteq [t-1]$, we define
    \begin{align*} 
        Z_I(u_0) & = 
            \sum_{\substack{ 
                u_1, \ldots, u_{2t-1} : \\
                d(u_0, u_i) \,=\, d(u_0, u_{2t-i}) \,=\, i \,\, \forall i \in [t], \\
                u_i \,=\, u_{2t-i}  \,\, \forall i \in I.
            }} W\!\left(u_0, \ldots, u_{2t-1}, u_0\right),
    \end{align*}
    and observe that by the inclusion-exclusion principle,
    \[
        Z(u_0) = \sum_{I \subseteq [t-1],\, I \neq \emptyset}(-1)^{|I|} Z_I(u_0).
    \]
    It thus suffices to show that $|Z_I(u_0)| \le (2t)^{2t}$ for every non-empty $I \subseteq [t-1]$. For vertices $u_0, u_{\ell}, u_k$ with $d(u_0, u_{\ell}) = \ell,\,\, d(u_0, u_k) = k$, where $0 \le \ell < k \le t$, define
    \begin{align*}
        S(u_0, u_{\ell}, u_k) & = 
        \left(\sum_{\substack{ 
                u_{\ell+1}, \ldots, u_{k-1} : \\
                d(u_0, u_i) \,=\, i \,\,\, \forall \ell \,<\, i \,<\, k. 
            }}  W\!\left(u_\ell, \ldots, u_{k}\right) \right)^2.
    \end{align*}
    Let $I \subseteq [t-1]$ be non-empty, and write $I = \{\alpha_1, \ldots, \alpha_{m-1}\}$, where $\alpha_1 < \ldots < \alpha_{m-1}$. Also let $\alpha_0 = 0$ and $\alpha_m = t$. Now observe that
    \begin{align*}
        Z_I(u_0) 
        &= \sum_{\substack{u_{\alpha_1},\ldots, u_{\alpha_m}: \\ d(u_0, u_{\alpha_i}) = \alpha_i \,\,\, \forall i \in [m]}} \,\,\prod_{i = 1}^m S\!\left(u_0, u_{\alpha_{i-1}}, u_{\alpha_i}\right) \\
        &= \sum_{\substack{u_{\alpha_1}: \\ d(u_0, u_{\alpha_1}) = \alpha_1}} \left(S(u_0, u_0, u_{\alpha_1}) \sum_{\substack{u_{\alpha_2}: \\ d(u_0, u_{\alpha_2}) = \alpha_2}} \left( S(u_0,u_{\alpha_1},u_{\alpha_2}) \cdots \sum_{\substack{u_{\alpha_m}: \\ d(u_0, u_{\alpha_m}) = \alpha_m}} S(u_0, u_{\alpha_{m-1}}, u_{\alpha_m}) \right) \ldots \right).
    \end{align*}
    Note that we have $\alpha_i - \alpha_{i-1} \leq t-1$ for all $i \in [m]$. We shall show that
    \[
        \sum_{u_k: \,\, d(u_0, u_k) = k} S(u_0, u_{\ell}, u_k) \le (2t)^{2(k-\ell)}
    \] 
    for all $k, \ell$ and $u_0, u_{\ell}$ such that $d(u_0, u_{\ell}) = \ell < k \le t$ and $k-\ell \leq t-1$. Since it is clear by definition that $S(u_0, u_{\ell}, u_k) \ge 0$ for every $u_0, u_{\ell}, u_k$, we may then conclude that $0 \le Z_I(u_0) \le (2t)^{2t}$, as required.
    
    The remainder of the proof is very similar to the proof of \Cref{thm:odd-cycle-upper}. Given $\ell, k$ and $u_0, u_{\ell}$ as above, let $A_i = \{u \in V(G) : d(u_0, u) = i, \, d(u_{\ell}, u) = i - \ell\}$ for $\ell < i \le k$. Since $d(u_{\ell}, u) = i-\ell \leq t-1$ for all $u \in A_i$, we may apply \Cref{lem:chi-bound} to conclude that $\chi(G[A_i]) \leq 2t$, and so we let $\{ B(i, 1), \ldots, B(i, 2t) \}$ be a partition of $A_i$ into $2t$ independent sets. Also observe that if $d(u_0, u_{\ell}) = \ell$, $d(u_0, u_i) = i$, and $u_0 \ldots u_{\ell} \ldots u_i$ is a walk in $G$, then $d(u_{\ell}, u_i ) = i - \ell$ so that $u_i \in A_i$. Therefore we obtain
    \begin{align*}
        &\sum_{u_k : \,\, d(u_0, u_k) = k} S(u_0, u_{\ell}, u_k)\\
        =\,& \sum_{\substack{u_{\ell+1}, \ldots, u_{k-1}, u_k: \,\, u_i \in A_i\\ w_{\ell+1}, \ldots, w_{k-1}: \,\, w_i \in A_i }} W\!\left(u_{\ell}, \ldots, u_k, w_{k-1}, \ldots, w_{\ell+1}, u_{\ell}\right)\\
        =\,& \sum_{\substack{a_{\ell+1},\ldots, a_k \in [2t] \\ b_{\ell+1}, \ldots, b_{k-1} \in [2t]}}\,\,
        \sum_{\substack{u_{\ell+1}, \ldots, u_k, w_{\ell+1}, \ldots, w_{k-1}: \\ u_i \in B(i, a_i) \\ w_i \in B(i, b_i)}} W\!\left(u_{\ell}, \ldots, u_k, w_{k-1}, \ldots, w_{\ell+1}, u_{\ell}\right) \\
        =\,& \sum_{\substack{a_{\ell+1},\ldots, a_k \in [2t] \\ b_{\ell+1}, \ldots, b_{k-1} \in [2t]}}\,\,
        \sum_{\substack{u_{\ell+1}, \ldots, u_k, w_{\ell+1}, \ldots, w_{k-1}: \\ u_i \in B(i, a_i) \\ w_i \in B(i, b_i) }} \,\,f(u_{\ell})^{\intercal} \left(\prod_{i \,=\, \ell+1}^k f(u_i) f(u_i)^{\intercal} \prod_{i \,=\, k-1}^{\ell+1} f(w_i) f(w_i)^{\intercal}\right) f(u_{\ell}) \\
        =\,& \sum_{\substack{a_{\ell+1}, \ldots, a_k \in [2t] \\ b_{\ell+1}, \ldots, b_{k-1} \in [2t]}}\,\,
        f(u_{\ell})^{\intercal} \left(\prod_{i \,=\, \ell+1}^k \left(\sum_{u_i \in B(i, a_i)} f(u_i) f(u_i)^{\intercal}\right) \prod_{i \,=\, k-1}^{\ell+1} \left( \sum_{w_i \in B(i, b_i)}\,\, f(w_i) f(w_i)^{\intercal} \right) \right) f(u_{\ell}).
    \end{align*}
    Thus if we define $P_{i, a} = \sum_{u_i \in B(i, a)} f(u_i) f(u_i)^{\intercal}$, then 
    \begin{align*}
        \sum_{u_k : \,\, d(u_0, u_k) = k} S(u_0, u_{\ell}, u_k) 
        = \sum_{\substack{a_{\ell+1}, \ldots, a_k \in [2t] \\ b_{\ell+1}, \ldots, b_{k-1} \in [2t]}} \inprod{f(u_{\ell})}{\left(\prod_{i \,=\, \ell+1}^k P_{i, a_i}\right)\left(\prod_{i \,=\, k-1}^{\ell+1} P_{i, b_i}\right)f(u_{\ell})}.
    \end{align*}
    Note that $P_{i, a}$ is an orthogonal projection onto the space spanned by $\{f(u_j): j \in B(i, a)\}$, and thus $\norm{P_{i,a} v} \le \norm{v}$ for every vector $v$. It follows by Cauchy--Schwarz that
    \begin{align*}
        \left|\sum_{u_k : \,\, d(u_0, u_k) = k} S(u_0, u_{\ell}, u_k) \right| 
        &\le \sum_{\substack{a_{\ell+1}, \ldots, a_k \in [2t] \\ b_{\ell+1}, \ldots, b_{k-1} \in [2t]}} \left|\inprod{f(u_{\ell})}{\left(\prod_{i \,=\, \ell+1}^k P_{i, a_i}\right)\left(\prod_{i \,=\, k-1}^{\ell+1} P_{i, b_i}\right)f(u_{\ell})} \right| \\           
        & \le \sum_{\substack{a_{\ell+1}, \ldots, a_k \in [2t] \\ b_{\ell+1}, \ldots, b_{k-1} \in [2t]}} \norm{f(u_{\ell})} \cdot \norm{\left(\prod_{i \,=\, \ell+1}^k P_{i, a_i}\right) \left(\prod_{i \,=\, k-1}^{\ell+1} P_{i, a_i}\right) f(u_{\ell})} \\
        & \le \sum_{\substack{a_{\ell+1}, \ldots, a_k \in [2t] \\ b_{\ell+1}, \ldots, b_{k-1} \in [2t]}} \norm{f(u_{\ell})}^2 \le (2t)^{2(k-\ell)}.
    \end{align*}
    As explained above, this completes the proof that $0 \le Z_I(u_0) \le (2t)^{2t}$ for every vertex $u_0$ and every non-empty $I \subseteq [t-1]$, which completes the proof of the theorem.
\end{proof}

\end{document}